\DeclareMathOperator{\res}{Res}
\DeclareMathOperator{\ind}{Ind}
\DeclareMathOperator{\Soc}{Soc}
\DeclareMathOperator{\Rad}{Rad}
\DeclareMathOperator{\End}{End}
\DeclareMathOperator{\Hom}{Hom}
\DeclareMathOperator{\Inf}{Inf}
\DeclareMathOperator{\GL}{GL}
\DeclareMathOperator{\id}{id}
\DeclareMathOperator{\im}{im}
\newcommand{\Liep}{\mathrm{Lie}_p}
\newcommand{\Lieppf}{\mathrm{Lie}^{\mathrm{pf}}_p}
\newcommand{\Lieppr}{\mathrm{Lie}^{\mathrm{pr}}_p}
\newcommand{\Lieppbl}{\mathrm{Lie}^{\mathrm{pbl}}_p}
\newcommand{\LieF}{\mathrm{Lie}_F}
\newcommand{\LieFpf}{\mathrm{Lie}^{\mathrm{pf}}_F}
\newcommand{\LieFpr}{\mathrm{Lie}^{\mathrm{pr}}_F}
\renewcommand{\leq}{\leqslant}
\renewcommand{\geq}{\geqslant}
\renewcommand{\unlhd}{\trianglelefteqslant}
\newcommand{\Liet}{\mathrm{Lie}_2}
\newcommand{\Liepft}{\mathrm{Lie}^{\mathrm{pf}}_2}
\newcommand{\Liepblt}{\mathrm{Lie}^{\mathrm{pbl}}_2}
\newcommand{\Lieth}{\mathrm{Lie}_3}
\newcommand{\Liepfth}{\mathrm{Lie}^{\mathrm{pf}}_3}
\newcommand{\Liepblth}{\mathrm{Lie}^{\mathrm{pbl}}_3}
\newcommand{\tildeM}{\widetilde{M}}
\newcommand{\hatN}{\widehat{N}}
\begin{document}
\swapnumbers
\theoremstyle{definition}
\newtheorem{defi}{Definition}[section]
\newtheorem{rem}[defi]{Remark}
\newtheorem{ques}[defi]{Question}
\newtheorem{expl}[defi]{Example}
\newtheorem{conj}[defi]{Conjecture}
\newtheorem{claim}[defi]{Claim}
\newtheorem{nota}[defi]{Notation}
\newtheorem{noth}[defi]{}

\theoremstyle{plain}
\newtheorem{prop}[defi]{Proposition}
\newtheorem{lemma}[defi]{Lemma}
\newtheorem{cor}[defi]{Corollary}
\newtheorem{thm}[defi]{Theorem}

\renewcommand{\proofname}{\textsl{\textbf{Proof}}}

\begin{center}
{\bf\Large Vertices of Lie Modules}

\medskip
Roger M.~Bryant, Susanne Danz, Karin Erdmann, and J\"urgen M\"uller\\

\today

\begin{abstract}
\noindent
Let $\LieF(n)$ be the Lie module of the symmetric group
$\mathfrak{S}_n$ over a field $F$ of characteristic $p>0$, that is,
$\LieF(n)$ is the left ideal of $F\mathfrak{S}_n$ generated by the
Dynkin--Specht--Wever element $\omega_n$. We study the problem of
parametrizing non-projective indecomposable summands of $\LieF(n)$, 
via describing their vertices and sources. Our main result shows
that this can be reduced to the case when $n$ is a power of $p$. 
When $n=9$ and $p=3$, and when $n=8$ and $p=2$, we present a precise answer.
This suggests a possible parametrization for arbitrary prime powers.

\smallskip

\noindent
{\bf Keywords:} Lie module, vertex, source, endo-permutation module, symmetric group

\noindent
{\bf MR Subject Classification:} 20C20, 20C30, 20G43

\end{abstract}
\end{center}

\section{Introduction}\label{sec:intro}

The Lie module of the symmetric group $\mathfrak{S}_n$ 
occurs in various contexts within algebra and topology,
where the name-giving property is its close relation to the
free Lie algebra; for more details, see for example the 
introduction in \cite{ET1}.
In the present paper, letting $F$ be an algebraically closed 
field of characteristic $p>0$, 
we realize the Lie module $\LieF(n)$ of $\mathfrak{S}_n$,
for $n\geq 2$, as the submodule $F\mathfrak{S}_n\omega_n$ 
of the regular $F\mathfrak{S}_n$-module, where 
$$ \omega_n:=(1-c_2)(1-c_3)\cdots (1-c_n)\in F\mathfrak{S}_n $$
is the Dynkin--Specht--Wever element of $F\mathfrak{S}_n$,
where in turn $c_k\in\mathfrak{S}_n$ is the backward cycle
$(k,k-1,\ldots,2,1)$.  

\begin{noth}
It is well known that $\omega_n^2=n\omega_n\in F\mathfrak{S}_n$.
Hence if $p$ does not divide $n$, then $\omega_n/n\in F\mathfrak{S}_n$
is an idempotent, so that $\LieF(n)$ is then a direct summand of
the regular $F\mathfrak{S}_n$-module and is, thus, projective. 
In the present paper we are interested in the case when $p$ divides $n$,
which we assume from now on in this section.
Then $\LieF(n)$ cannot be projective; for otherwise
$\dim(\LieF(n))=(n-1)!$ would have to be divisible by the $p$-part 
of $n!$, which is not the case.
Therefore, in this case $\LieF(n)$ admits a decomposition
\begin{equation*}\label{equ:liedec}
\LieF(n)=\LieFpr(n)\oplus \LieFpf(n),
\end{equation*}
where $\LieFpr(n)$ is a projective $F\mathfrak{S}_n$-module and where
$\LieFpf(n)\neq\{0\}$ is a projective-free $F\mathfrak{S}_n$-module. 

The asymptotic behaviour of the quotient 
$\dim(\LieFpr(n))/\dim(\LieF(n))$ has recently been studied by 
Erdmann--Tan \cite{ET1}, and by Bryant--Lim--Tan \cite{BLT}.
By \cite[Thm. 1.2]{BLT}, one has
$$\frac{\dim(\LieFpr(n))}{\dim(\LieF(n))}\longrightarrow 1,$$
as $n\longrightarrow \infty$ in $\mathbb{N}\smallsetminus\{p^k\mid k\geq 0\}$.
Moreover, it is conjectured in \cite{BLT} that this should remain true when
allowing $n$ to vary over all natural numbers.
This suggests that $\LieFpf(n)$ 
should be small, compared with the entire Lie module $\LieF(n)$.

Moreover, by work of Erdmann--Tan \cite{ET2}, we also know that 
the projective-free part $\LieFpf(n)$ of $\LieF(n)$ always belongs 
to the principal block of $F\mathfrak{S}_n$,
and Bryant--Erdmann \cite{BE} have studied indecomposable direct sum 
decompositions of the, necessarily projective, part of $\LieF(n)$
not contained in the principal block of $F\mathfrak{S}_n$. 
This leaves open, next to $\LieFpf(n)$, only the direct sum
decompositions of the component of $\LieFpr(n)$ 
belonging to the principal block of $F\mathfrak{S}_n$.
\end{noth}

\begin{noth}
One key ingredient of our approach is a decomposition theorem, 
expressing $\LieF(n)$ as a direct sum of pieces related to 
Lie modules $\LieF(p^d)$, for various $d$ such that $p^d$ divides $n$.
This is obtained by translating the 
Bryant--Schocker decomposition theorem \cite{BS}  for Lie powers
to Lie modules, using work of Lim--Tan \cite{LT}.
This paves the way to reduce questions on Lie modules to the 
case when $n$ is a power of $p$,
and puts the Lie modules $\LieF(p^d)$ into the focus of study. 
In particular, one is tempted to ask whether there is a neat description 
of the indecomposable direct summands of $\LieF(n)$ in terms of
those of $\LieF(p^d)$, where $d$ varies as indicated above.
This has been fully accomplished
for the case where $p$ divides $n$ but $p^2$ does not,
with a different line of reasoning, by Erdmann--Schocker \cite{ES},
while the general case remains a mystery and is subject to
further investigations.

Very little information concerning the decomposition 
of the principal block component of $\LieF(p^d)$ is available in the 
literature, and the projective-free part $\LieFpf(p^d)$ is very poorly 
understood, even for very small exponents $d$: to our knowledge, 
the only cases dealt with systematically are the modules
$\LieFpf(p)$, that is, the case $d=1$, by Erdmann--Schocker \cite{ES};
and, apart from the easy case $\LieF(4)=\LieFpf(4)$, 
there are just partial results for $\LieF(8)$, by Selick--Wu \cite{SW}.
The aim of this paper now is to investigate indecomposable direct summands
of $\LieF(p^d)$, for a few further small values of $p$ and $d$.

The major obstacle here
is that, due to the exponential growth of the dimension of Lie modules 
in terms of $n$, these modules quickly become very large.
Hence, to proceed further in this direction, we apply computational
techniques. More precisely, by this approach we are now able to give
a complete description of the Lie modules $\LieF(8)$ of dimension $5040$,
and $\LieF(9)$ of dimension $40320$.

Actually, in both cases it turns out that the projective-free part 
of the Lie module is already  indecomposable, where $\LieFpf(8)$
has dimension 816, and $\LieFpf(9)$ has dimension 1683.
In view of these results, and those on $\LieFpf(4)$ and 
$\LieFpf(p)$ mentioned above, the question 
arises whether $\LieFpf(p^d)$ is always indecomposable.
\end{noth}

\begin{noth}
To analyze the projective-free part of $\LieF(n)$, we are, in particular, interested in the Green vertices and sources
of the indecomposable direct summands of $\LieFpf(n)$. Using the reduction
result mentioned above, to some extent we are able to reduce this 
problem for arbitrary $n$ to the case where $n$ is a $p$-power.

Moreover, we are able to compute vertices and sources of $\LieFpf(8)$
and $\LieFpf(9)$. It turns out that both modules are endo-$p$-permutation
modules, in the sense of Urfer \cite{Urfer},
their vertices are regular elementary abelian subgroups of $\mathfrak{S}_8$ and $\mathfrak{S}_9$, respectively, 
and their sources are endo-permutation modules, 
in the sense of Dade \cite{Dade1}, whose class in the
Dade group we are able to determine.
It is surprising to us
to see the class of endo-permutation modules 
appear in this context. 

Hence, in view of these results, and those concerning 
$\LieFpf(4)$ and $\LieFpf(p)$, one may wonder whether 
$\LieFpf(p^d)$, assumed to be indecomposable,
always is an endo-$p$-permutation module 
having regular elementary abelian vertices
and endo-permutation sources, and, if so, 
what the class of a source in the Dade group looks like.
If this holds true, then, by our reduction results,
any indecomposable direct summand of any Lie module
will have vertices and sources sharing the same properties.
\end{noth}

\begin{noth}\label{noth:schedule}
This paper is organized as follows: 
in Section \ref{sec:pre} we provide the necessary prerequisites;
in particular, we recall the notions of Green vertices and sources,
endo-permutation and endo-$p$-permutation modules, and the Dade group.
In Section \ref{sec:lie} we introduce Lie modules; in order to make
this paper sufficiently self-contained, we also discuss Lie powers and 
their relation to Lie modules via the Schur functor, as well as 
variations on the construction and basic properties of Lie modules.

In Section \ref{sec:comp} we collect the explicit computational results 
we have obtained for specific examples; in particular, we present more 
details of the computational ideas and tools we have been using,
we revisit $\LieF(p)$ and $\LieF(4)$, and discuss the major examples 
$\LieF(8)$ and $\LieF(9)$, whose indecomposable direct sum decomposition
we determine, together with vertices and sources of the non-projective
indecomposable direct summands occurring.

In Section \ref{sec:reduction} we present a reduction,
eventually showing that vertices and sources of indecomposable 
direct summands of Lie modules in general can be described in
terms of the results in the $p$-power case; in order to do so,
in Theorem~\ref{thm:wreathreduce} we provide a description
of vertices and sources of indecomposable direct summands of
modules for wreath products,
in Theorem~\ref{prop:BE} we present the decomposition theorem
for Lie modules mentioned above, and in Theorem~\ref{thm:vertexLie} these
are combined to prove the reduction result.
\end{noth}

\medskip

\noindent
{\bf Acknowledgement:} The second author's research has been supported 
through a Marie Curie Career Integration Grant (PCIG10-GA-2011-303774).
The second and fourth authors have also been supported by the DFG Priority Programme `Representation Theory'
(grant \# DA1115/3-1). The first and third authors have been supported by EPSRC Standard Research Grant \# EP/G025487/1.

\section{Prerequisites}\label{sec:pre}

\begin{noth}\label{noth:general}
{\bf Generalities.}\,
(a)\, Throughout this paper, without further notice, we assume that
$F$ is  an algebraically closed field of characteristic $p>0$. 
Whenever $G$ is a finite group, an $FG$-module is always
understood to be a left $FG$-module of finite $F$-dimension, unless
stated otherwise. 
If $M$ and $N$ are $FG$-modules such that $N$ is 
isomorphic to a direct summand of $M$ then we write $N\mid M$.

Whenever $G$ is a finite group, $H$ is a normal subgroup of $G$, and
$M$ is an $F[G/H]$-module, we denote by $\Inf_{G/H}^G(M)$ the 
$FG$-module obtained from $M$ via inflation. 
More generally, by abuse of notation,
given a fixed epimorphism of groups $G\twoheadrightarrow K$ and an 
$FK$-module $M$, we denote the $FG$-module obtained via inflation
with respect to this epimorphism by $\Inf_K^G(M)$ as well.

\medskip

(b)\, 
By $\mathfrak{S}_n$ we denote the symmetric
group of degree $n\geq 1$, where permutations in $\mathfrak{S}_n$ are
also multiplied from right to left.
So, for instance, we have $(1,2)(2,3)=(1,2,3)$.

We assume the reader to be familiar with the basic notions of the 
representation theory of the symmetric group. 
For detailed background information, we refer to \cite{J,JK}.
The Specht modules $S^\lambda$ of the group algebra $F\mathfrak{S}_n$ will,
as usual, be labelled by the partitions $\lambda$ of $n$,
and the simple $F\mathfrak{S}_n$-modules $D^\lambda$ by
the $p$-regular partitions of $n$.
Furthermore, we denote by $P^\lambda$ a projective cover of $D^\lambda$.

\medskip

(c)\, Whenever $G$ is a finite group with subgroups $H$ and $K$, we write 
$H\leq_GK$ if $H$ is $G$-conjugate to a subgroup of $K$, and we write $H=_GK$ if
$H$ is $G$-conjugate to $K$.
\end{noth}

Next we recall the notions of vertices and sources of indecomposable modules
over group algebras, and we summarize some basic properties of
endo-permutation modules over finite $p$-groups. 
The latter class of modules has been introduced by Dade \cite{Dade1},
as generalizations of permutation modules. They have
proved to play an important role in modular representation theory
of finite groups, and, as we will see in subsequent sections, also
appear naturally in the context of Lie modules.
For a detailed account on
the theory of vertices and sources we refer the reader to \cite[Chap. 4.3]{NT}.
Background information concerning endo-permutation modules
can be found in \cite{Dade1,Dade2} and in \cite[\S 28]{Thev}.


\begin{noth}\label{noth:vertex}
{\bf Vertices and sources.}\,
(a)\, Let $G$ be a finite group, and let $M$ be an indecomposable 
$FG$-module. By Green's Theorem \cite{Green}, we can assign to $M$ a
$G$-conjugacy class of $p$-subgroups of $G$, the {\it vertices of $M$}.
A vertex $Q$ of $M$ is characterized by the property that $Q$ is minimal
such that $M$ is {\it relatively $Q$-projective}, 
that is, $M$ is isomorphic to a direct summand of $\ind_Q^G(N)$,
for some indecomposable $FQ$-module $N$. 
In particular, $M$ is projective if and only if $Q=\{1\}$.

Given a vertex $Q$ of $M$, an indecomposable $FQ$-module $L$ 
such that $M$ is isomorphic to a direct summand of $\ind_Q^G(L)$
is called a {\it $Q$-source} of $M$, and is unique up to
isomorphism and conjugation with elements in $N_G(Q)$.
Moreover, $Q$ is also a vertex of $L$.

\medskip

(b)\, Let $B$ be the block of $FG$ containing $M$. 
If $Q$ is a vertex of $M$ then there are a defect group
$R$ of $B$ and a Sylow $p$-subgroup $P$ of $G$ such that
$Q\leq R\leq P$. Moreover, as a consequence of
Green's Indecomposability Theorem \cite{Green}, 
$|P:Q|$ divides $\dim(M)$.

\medskip

(c)\, Suppose that $H\leq G$ is any subgroup of $G$ and that $N$ is an 
indecomposable direct summand of $\res_H^G(M)$ with vertex 
$R$ and $R$-source $L'$. Then there are a vertex $Q$ of $M$ 
and a $Q$-source $L$ of $M$ such that $R\leq Q$ and $L'\mid\res_R^Q(L)$.
This is seen as follows:

Let $Q$ be any vertex of $M$, and let $L$ be any $Q$-source of $M$. 
Then we have $L'\mid \res_R^H(N)\mid\res_R^G(M)$ and
$M\mid \ind_Q^G(L)$, thus $L'\mid \res_R^G(\ind_Q^G(L))$. 
Now, by Mackey's Theorem and the fact that $L'$ has vertex $R$,
this implies $L'\mid \res_R^{{}^gQ}({}^gL)$, for some $g\in G$ 
such that $R\leq {}^gQ$.
But ${}^gQ$ is also a vertex of $M$, and ${}^gL$ is a ${}^gQ$-source of $M$,
whence the claim.

In particular, if $H\leq G$ is such that $M$ is relatively $H$-projective,
then $M\mid \ind_H^G(\res_H^G(M))$ implies that there is
an indecomposable direct summand of $\res_H^G(M)$ sharing a 
vertex and a source with $M$. 

\medskip

(d)\, Suppose, conversely,  that $H\geq G$ is a finite overgroup of $G$
and that $N$ is an indecomposable direct summand of $\ind_G^H(M)$.
Then, given a vertex $Q$ of $M$ and a $Q$-source $L$ of $M$,
there is a vertex $P$ of $N$ such that $P\leq Q$, and there is some 
$P$-source of $N$ that is isomorphic to a direct summand of $\res_P^Q(L)$.
This is seen as follows:

Let $P$ be any vertex of $N$, and let $L'$ be any $P$-source of $N$.
Then, by Mackey's Theorem again, we get $L'\mid \res_P^{{}^hQ}({}^hL)$, 
for some $h\in H$ such that $P\leq {}^hQ$. In other words, we have
${}^{h^{-1}}L'\mid \res_{{}^{h^{-1}}P}^{Q}(L)$, where
${}^{h^{-1}}P$ is also a vertex of $N$, and ${}^{h^{-1}}L'$ 
is a ${}^{h^{-1}}P$-source of $N$, whence the claim.
(Note that we cannot conclude that ${}^{h^{-1}}L'$ is an
arbitrary ${}^{h^{-1}}P$-source of $N$, since the conjugating element $h$ 
 might depend on the choice of $L'$.)

In particular, since $M\mid\res_G^H(\ind_G^H(M))$, there is some 
indecomposable direct summand of $\ind_G^H(M)$ sharing a
vertex and a source with $M$.
\end{noth}


\begin{noth}\label{noth:endoperm}
{\bf Endo-permutation modules.}\,
An $FG$-module $M$, where $G$ is a finite group,
is called an {\it endo-permutation module} if its $F$-endomorphism ring 
$\End_F(M)\cong M\otimes M^*$ is a permutation $FG$-module. 
We list some properties of endo-permutation modules
that we will need later in this paper:

\medskip

(a)\, 
Permutation modules are endo-permutation modules. 
The class of endo-permutation modules is 
closed under taking $F$-linear duals, direct summands, tensor products,
restriction to subgroups, inflation from factor groups, and 
taking Heller translates $\Omega$ and $\Omega^{-1}$,
but it is neither closed under taking direct sums, 
nor under induction to finite overgroups.
In particular, any indecomposable endo-permutation module has 
endo-permutation sources.

\medskip

(b)\, The problem of classifying the 
indecomposable endo-permutation modules for finite $p$-groups $P$
has been worked on by various people.
The final classification result was obtained by Bouc \cite{Bouc},
but when $P$ is abelian, the following classification 
result is already due to Dade \cite{Dade2}. We will describe this result below; this is the version
we will need.

Note that it is indeed sufficient to classify the
indecomposable endo-permutation $FP$-modules with vertex $P$, 
since if $M$ is an indecomposable endo-permutation $FP$-module with vertex $Q<P$ 
then any $Q$-source $S$ of $M$ is an endo-permutation $FQ$-module 
with vertex $Q$, and by Green's Indecomposability Theorem \cite{Green}
we have $M\cong\ind_Q^P(S)$.
\end{noth}

\begin{thm}[\protect{\cite[Thm. 12.5]{Dade2}}]\label{thm:Dade}
Let $P$ be an abelian $p$-group, and let $M$ be an indecomposable
endo-permutation $FP$-module with vertex $P$. Then $M$ is, up to
isomorphism, the unique indecomposable direct summand of 
\begin{equation*}\label{equ:dade}
\bigotimes_{|P:Q|\geq 3}\Inf_{P/Q}^P(\Omega^{n_Q}(F_{P/Q}))
\end{equation*}
having vertex $P$. Here $n_Q\in\mathbb{Z}$, for $Q<P$, is uniquely
determined by $M$ if $P/Q$ is non-cyclic; otherwise $n_Q$ is
uniquely determined modulo $2$.
\end{thm}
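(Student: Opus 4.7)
My plan is to organize the argument through the Dade group $D(P)$, consisting of equivalence classes of capped endo-permutation $FP$-modules (those admitting an indecomposable summand of vertex $P$), with tensor product as operation; two modules are declared equivalent when their vertex-$P$ caps are isomorphic. The assertion of the theorem is then equivalent to saying that $D(P)$ decomposes as a direct sum, over proper subgroups $Q < P$ with $|P/Q| \geq 3$, of copies of $\mathbb{Z}$ when $P/Q$ is non-cyclic and of $\mathbb{Z}/2$ when $P/Q$ is cyclic, with generators $d_Q := [\Inf_{P/Q}^P \Omega(F_{P/Q})]$.

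First I would check that $D(P)$ is well-defined as an abelian group. Since $M \otimes M^*$ is a permutation module containing $F_P$ with multiplicity one, the cap is unique up to isomorphism and $[M^*] = [M]^{-1}$. Next I would establish the basic relations among the $d_Q$. When $P/Q$ is cyclic of order $p^k \geq 3$, enumeration of the uniserial $F[P/Q]$-modules yields $\Omega^2(F_{P/Q}) \cong F_{P/Q}$ modulo projectives, forcing $2 d_Q = 0$; moreover, in the excluded case $|P/Q| = 2$ one has $\Omega(F_{P/Q}) \cong F_{P/Q}$ modulo projectives, which explains the index restriction. When $P/Q$ is non-cyclic, comparison of $F$-dimensions of the caps of $\Omega^n(F_{P/Q})$ as $n$ varies shows that $d_Q$ has infinite order.

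I would then prove surjectivity of the homomorphism $\Phi : \bigoplus_Q \mathbb{Z} \to D(P)$, $(n_Q)_Q \mapsto \prod_Q d_Q^{n_Q}$, by induction on $|P|$, with base cases $P$ of order $p$ (where the non-projective indecomposable $FP$-modules are precisely the Heller translates of $F_P$) and $P$ elementary abelian of rank $2$ (handled by direct inspection of Heller translates and their tensor products). For the inductive step, given $[M] \in D(P)$ I would examine the restrictions of $M$ to maximal subgroups $Q < P$; by induction each such restriction lies in the image of the analogous map for $Q$, and compatibility of the generators $d_{Q'}$ with the restriction maps $D(P) \to D(Q)$ allows one to cancel those contributions one by one. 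What remains is a class detected by deflation $D(P) \to D(P/R)$ for a suitable non-trivial $R \leq P$, and a further appeal to induction closes the reduction.

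The main obstacle is the uniqueness of the exponents $n_Q$, that is, injectivity of $\Phi$ modulo the relations above. This requires producing, for each relevant $Q$, a detection homomorphism $D(P) \to \mathbb{Z}$ or $D(P) \to \mathbb{Z}/2$ that isolates $n_Q$. Such maps are built by composing deflation $D(P) \to D(P/Q')$ with invariants on the latter such as dimensions of caps reduced modulo $p$, or rational characters attached to Heller translates; the technical crux is to show that for distinct $Q$ these detection maps are linearly independent, which is done by evaluating them on the generators $d_{Q'}$ and reducing to a triangularity statement with respect to the subgroup lattice of $P$. Once linear independence is established, the kernel of $\Phi$ coincides with the subgroup generated by the $2 d_Q$ for $P/Q$ cyclic, and the theorem follows.
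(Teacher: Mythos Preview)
The paper does not prove this theorem at all: it is quoted verbatim from Dade's original paper \cite[Thm.~12.5]{Dade2} and used as a black box. There is therefore nothing in the paper to compare your argument against; the ``paper's own proof'' consists of a citation.

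That said, a brief comment on your sketch as an independent proof. The overall architecture---phrasing the statement as a computation of the Dade group $\mathfrak{D}(P)$ for abelian $P$, exhibiting the generators $d_Q$, checking their orders, and then proving surjectivity and injectivity of the resulting map---is the standard route and matches Dade's own strategy. Your treatment of the orders of the $d_Q$ is fine. However, the inductive step for surjectivity is where the genuine difficulty lies, and your description (``cancel those contributions one by one'', ``what remains is a class detected by deflation'') glosses over the hard part. Concretely: after adjusting $[M]$ so that all restrictions to proper subgroups are trivial in the respective Dade groups, you need to show that $[M]$ itself is then trivial (or at worst a single $d_Q$). This is not a formal consequence of the inductive hypothesis; it requires Dade's analysis of capped endo-permutation modules whose restrictions to every maximal subgroup have trivial cap, and that analysis is the technical heart of \cite{Dade2}. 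Your sketch does not indicate how you would carry it out. Similarly, the ``triangularity'' you invoke for injectivity is plausible but not argued. So the outline is correct in spirit, but the two places you label ``main obstacle'' and ``technical crux'' are precisely where a proof would need substantial work that you have not supplied.
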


\medskip
In other words, the isomorphism types of indecomposable
endo-permutation $FP$-modules with \emph{abelian} 
vertex $P$ are in bijection with
the elements of the \emph{Dade group}
$$ \mathfrak{D}(P)\cong
\left(\sum_{|P:Q|\geq 3,\, P/Q\text{ non-cyclic}}\mathbb{Z}\right)\oplus
\left(\sum_{|P:Q|\geq 3,\,P/Q\text{ cyclic}} \mathbb{Z}/2\mathbb{Z}
\right) \,. $$
For a precise definition of the Dade group of an arbitrary $p$-group $P$ and further details, see \cite{Dade1}
and \cite[\S 29]{Thev}. Whenever $S$ is an indecomposable endo-permutation $FP$-module with vertex
$P$, its image in $\mathfrak{D}(P)$ will be denoted by $[S]$. If $S$ and $S'$ are indecomposable endo-permutation $FP$-modules
with vertex $P$ then their tensor product $S\otimes S'$ has a unique (up to isomorphism) indecomposable direct summand $T$
with vertex $P$, and the multiplication in $\mathfrak{D}(P)$ is then given by $[S]\cdot [S']:=[T]$.

\begin{noth}\label{noth:endopperm}
{\bf Endo-$p$-permutation modules.}\,
According to Urfer \cite{Urfer,UrferThesis}, one can weaken the notion of 
endo-permutation modules as follows:
an $FG$-module $M$, where $G$ is a finite group, is called 
an {\it endo-$p$-permutation module}, if its $F$-endomorphism ring
$\End_F(M)\cong M\otimes M^*$ is a $p$-permutation $FG$-module,
that is, all its indecomposable direct summands are trivial-source modules.

\medskip

(a)\,
Again, one has the following standard properties of endo-$p$-permutation
modules: endo-permutation modules are endo-$p$-permutation modules.
The class of endo-$p$-permutation modules is
closed under taking $F$-linear duals, Heller translates, direct summands, tensor products,
restriction to subgroups, and inflation from factor groups,
but it is neither closed under taking direct sums, 
nor under induction to finite overgroups. Note that for finite 
$p$-groups the classes of endo-$p$-permutation modules and of 
endo-permutation modules coincide, thus any indecomposable
endo-$p$-permutation module has endo-permutation sources.

\medskip

(b)\,
A characterization of indecomposable endo-$p$-permutation module in terms
of vertices and sources is given in \cite[Thm. 1.5]{Urfer}. For the
cases of interest in the present paper it can be rephrased as follows:

Let $P\leq G$ be a $p$-group. As before, for any indecomposable endo-permutation $FP$-module $S$ with vertex $P$ let
$[S]\in\mathfrak{D}(P)$ be the associated element of the Dade group.
Then $[S]\in\mathfrak{D}(P)$ is called \emph{$G$-stable} if 
$$ \res^{P}_{P\cap {}^gP}([S])=\res^{{}^gP}_{P\cap {}^gP}([{}^gS])
   \in\mathfrak{D}(P\cap {}^gP)\,, 
\quad\text{for all}\quad g\in G \,. $$

Then, by \cite[Thm. 1.5]{Urfer},
an indecomposable $FG$-module with vertex $P$ and $P$-source $S$
is an endo-$p$-permutation module if and only if 
$S$ is an endo-permutation module such that $[S]$ is $G$-stable.

\medskip

(c)\,
As in part~(b), let $P\leq G$ be a $p$-group.
In good situations the 
$G$-stable elements of $\mathfrak{D}(P)$ are described by a 
Burnside-type fusion argument as follows:

Let $N_G(P)$ \emph{control fusion} in $P$, that is, whenever $Q\leq P$ and
$g\in G$ are such that ${}^gQ\leq P$,  there are some
$h\in N_G(P)$ and $z\in C_G(Q)$ such that $g=hz$.
Then, by \cite[La. 1.8, Prop. 1.9]{Urfer}, an element $[S]\in\mathfrak{D}(P)$
is $G$-stable if and only if it is fixed by the conjugation 
action of $N_G(P)$ on $\mathfrak{D}(P)$.
(Note that we do not require an additional saturation condition here,
as is done in \cite{Urfer}: an inspection of the 
proofs of \cite[La. 1.8, Prop. 1.9]{Urfer} shows that 
they are valid under the assumptions made here.)

Moreover, if $P$ is {\it abelian} then an element $[S]\in\mathfrak{D}(P)$ is an $N_G(P)$-fixed point
if and only if the associated function $Q\longrightarrow n_Q$ 
is constant on the $N_G(P)$-orbits on $\{Q<P\mid |P:Q|\geq 3\}$.
\end{noth}

\section{The Lie Module of the Symmetric Group}\label{sec:lie}

We begin this section by introducing the Lie module for $F\mathfrak{S}_n$,
we list some of its properties, and briefly discuss variations.
Many of these observations are certainly well known 
to the experts, but explicit references are not too easy to find. 
Thus we recall them here for the readers' convenience, 
and to make this paper as self-contained as possible. 

\begin{noth}\label{noth:E}
{\bf Lie powers.}\,
(a)\,
Let $\GL_n(F)$ be the general linear group over $F$, where $n\geq 1$,
which acts naturally on $F^n$, and let $\{e_1,\ldots,e_n\}$
be the standard basis of $F^n$. 
We may view $\mathfrak{S}_n$ as a subgroup of $\GL_n(F)$, by identifying 
a permutation $\pi\in\mathfrak{S}_n$ with the corresponding permutation matrix
in $\GL_n(F)$.

The $r$-th tensor power $(F^n)^{\otimes r}$, where $r\geq 1$,
is an $F[\GL_n(F)]$-module by way of the diagonal action.
Thus, via restriction $(F^n)^{\otimes r}$ becomes an 
$F\mathfrak{S}_n$-module,
where the symmetric group $\mathfrak{S}_n$ acts by substitutions, that is,
\begin{equation*}\label{equ:sub}
\pi: v_1\otimes\cdots\otimes v_r\longmapsto
\pi v_1\otimes\cdots\otimes \pi v_r,
\quad\text{for}\quad v_1,\ldots,v_r\in F^n,\; \pi\in\mathfrak{S}_n.
\end{equation*}

On the other hand, $(F^n)^{\otimes r}$ also carries a right 
$F\mathfrak{S}_r$-action `$*$' via place permutations,
which hence centralizes the $F[\GL_n(F)]$-action:
\begin{equation*}\label{equ:place}
\sigma: v_1\otimes\cdots\otimes v_r\longmapsto 
(v_1\otimes\cdots\otimes v_r)*\sigma=
v_{\sigma(1)}\otimes\cdots\otimes v_{\sigma(r)},
\quad\text{for}\quad v_1,\ldots,v_r\in F^n,\; 
\sigma\in\mathfrak{S}_r.
\end{equation*}

\medskip

(b)\,
Now we consider the {\it Lie bracket}
$$\kappa_2: (F^n)^{\otimes 2}\longrightarrow (F^n)^{\otimes 2},\; 
  v_1\otimes v_2\longmapsto [v_1,v_2]:=v_1\otimes v_2-v_2\otimes v_1, 
\quad\text{for}\quad v_1,v_2\in F^n. $$
By iteration, this yields the 
({\it left-normed}) Lie bracket
$$\kappa_r: (F^n)^{\otimes r}\longrightarrow (F^n)^{\otimes r},\; 
  v_1\otimes\cdots\otimes v_r\longmapsto 
[[v_1,v_2,\ldots,v_r]:=[\cdots [[v_1,v_2],v_3],\ldots,v_r] \,,$$
for all $r\geq 2$, and for completeness we  also let $\kappa_1:=\id$.

Hence we have $\kappa_r\in\End_F((F^n)^{\otimes r})$, for all $r\geq 1$,
where we assume $\End_F((F^n)^{\otimes r})$ to act on $(F^n)^{\otimes r}$
from the right, the action also being denoted by `$*$'.
The image $(v_1\otimes\cdots\otimes v_r)\ast\kappa_r\in (F^n)^{\otimes r}$
of a pure tensor $v_1\otimes\cdots\otimes v_r\in (F^n)^{\otimes r}$ is 
called an {\it (iterated) Lie bracket} of {\it length} $r$.

The Lie bracket induces the structure of a Lie algebra on the 
tensor algebra $T(F^n):=\bigoplus_{r\geq 1}(F^n)^{\otimes r}$.
Hence, by definition, for all $r\geq 2$ we have 
$(v_1\otimes\cdots\otimes v_r)\ast\kappa_r
=[(v_1\otimes\cdots\otimes v_{r-1})*\kappa_{r-1},v_r]$.
Moreover, for $r\geq 1$, the right adjoint action of $T(F^n)$ translates into
\begin{equation}\label{equ:adjact}
(v\otimes(v_1\otimes\cdots\otimes v_r)\ast\kappa_r)\ast\kappa_{r+1}
=[v,(v_1\otimes\cdots\otimes v_r)\ast\kappa_r]\in (F^n)^{\otimes (r+1)}.
\end{equation}

The map $\kappa_r$ centralizes the $F[\GL_n(F)]$-action,
hence the image 
$$L^r(F^n):=(F^n)^{\otimes r}*\kappa_r\subseteq (F^n)^{\otimes r}$$ 
of $\kappa_r$ is an $F[\GL_n(F)]$-submodule of $(F^n)^{\otimes r}$,
being called the $r$-th {\it Lie power} of $F^n$, where of course
we have $L^1(F^n)=F^n*\kappa_1=F^n*\id=F^n$. Thus we obtain the
free Lie algebra on $\{e_1,\ldots,e_n\}$
$$ L(F^n):=\bigoplus_{r\geq 1}L^r(F^n)\subseteq T(F^n)\, .$$
The fact that $L(F^n)$ is free as a Lie algebra is well known, and is due to Witt.

\medskip
(c)\,
By Schur--Weyl duality, 
the action of $\kappa_r$ is induced by the place permutation action 
of some  element 
$\omega_r\in F\mathfrak{S}_r$, which we are now going to determine:

For $r\geq 1$ let $c_r:=(r,r-1,\ldots,1)\in\mathfrak{S}_r$.
Note that, of course, $c_1=1$.
Then the place permutation action yields
\begin{equation*}\label{equ:cyc}
(v_1\otimes\cdots\otimes v_r)*c_r=
(v_r\otimes v_1\otimes\cdots\otimes v_{r-1}),
\quad\text{for}\quad v_1,\ldots,v_r\in F^n\,.
\end{equation*}

Now, for $r=2$, we have
$(v_1\otimes v_2)*\kappa_2=[v_1,v_2]=v_1\otimes v_2-v_2\otimes v_1
=(v_1\otimes v_2)*(1-c_2)$,
while for $r\geq 3$ and $v_1,\ldots,v_r\in F^n$ we get
\begin{align*}
(v_1\otimes\cdots\otimes v_r)*\kappa_r &=
((v_1\otimes\cdots\otimes v_{r-1})*\kappa_{r-1})\otimes v_r
-v_r\otimes ((v_1\otimes\cdots\otimes v_{r-1})*\kappa_{r-1}) \\
&=
(v_1\otimes\cdots\otimes v_r)*(\kappa_{r-1}\otimes\id)
-(v_1\otimes\cdots\otimes v_r)*(\kappa_{r-1}\otimes\id)*c_r \\
&=
(v_1\otimes\cdots\otimes v_r)*(\kappa_{r-1}\otimes\id)*(1-c_r) .
\end{align*}
Thus, by induction on $r\geq 2$, this gives
$\kappa_r= {}*(1-c_2)*(1-c_3)*\cdots *(1-c_r):
(F^n)^{\otimes r}\longrightarrow (F^n)^{\otimes r}$,
so that, for $r\geq 2$, we have
$$ \omega_r:=(1-c_2)(1-c_3)\cdots (1-c_r)\in F\mathfrak{S}_r\, ,$$
and 
$L^r(F^n)=(F^n)^{\otimes r}*\kappa_r=(F^n)^{\otimes r}*\omega_r$. 
The element $\omega_r$ is called
the {\it Dynkin--Specht--Wever element} of $F\mathfrak{S}_r$;
for completeness, since $\kappa_1=\id$, we let 
$\omega_1:=c_1\in F\mathfrak{S}_1$. Note that we even have
$\omega_r\in\mathbb{F}_p\mathfrak{S}_r$, where $\mathbb{F}_p$
is the prime field of $F$.
\end{noth}

\begin{noth}\label{noth:Lie}
{\bf Lie modules and the Schur functor.}\,
(a)\, Now let $n\geq r$. Then the {\it classical Schur functor} $\mathfrak{W}^r$ takes 
homogeneous polynomial $F[\GL_n(F)]$-modules of degree $r$ to
$F\mathfrak{S}_r$-modules, where, more precisely, an $F[\GL_n(F)]$-module 
$V$ is mapped to its $(1^r)$-weight space $\mathfrak{W}^{r}(V)$. In particular, for the 
$F[\GL_n(F)]$-module $(F^n)^{\otimes r}$ one more explicitly gets the following:

As mentioned above, the natural $\GL_n(F)$-action on $(F^n)^{\otimes r}$ 
induces a permutation action of $\mathfrak{S}_n$, and thus also a permutation action of
$\mathfrak{S}_r$, on $(F^n)^{\otimes r}$.
The vector $e_1\otimes\cdots\otimes e_r\in (F^n)^{\otimes r}$
affords a regular $\mathfrak{S}_r$-orbit
and, hence, induces an embedding of the regular 
$F\mathfrak{S}_r$-module into $(F^n)^{\otimes r}$ via
$$F\mathfrak{S}_r\longrightarrow (F^n)^{\otimes r},\; 
\pi\longmapsto \pi e_1\otimes\cdots\otimes \pi e_r
=e_{\pi(1)}\otimes\cdots\otimes e_{\pi(r)},
\quad\text{for}\quad \pi\in\mathfrak{S}_r .$$
The image of this embedding equals the 
$(1^r)$-weight space 
$$ \mathfrak{W}^{r}((F^n)^{\otimes r})
=\mathrm{Span}_F(\{e_{\pi(1)}\otimes\cdots\otimes e_{\pi(r)}
\mid \pi\in\mathfrak{S}_r\})\subseteq (F^n)^{\otimes r} $$
of the $F[\GL_n(F)]$-module $(F^n)^{\otimes r}$.
Moreover, the place permutation action of $\mathfrak{S}_r$ on
$(F^n)^{\otimes r}$ restricts to $\mathfrak{W}^{r}((F^n)^{\otimes r})$,
and via the above isomorphism
$\mathfrak{W}^{r}((F^n)^{\otimes r})\cong F\mathfrak{S}_r$
translates into right multiplication on $F\mathfrak{S}_r$.

\smallskip

\mbox{}From now on, suppose that $n=r$, which will be the case most relevant to
us.

\medskip

(b)\,
Now one defines the {\it Lie module} $\LieF(n)$ of $F\mathfrak{S}_n$ as 
the $(1^n)$-weight space of the $n$-th Lie power $L^n(F^n)$, that is,
$$ \LieF(n):=\mathfrak{W}^{n}(L^n(F^n))\subseteq (F^n)^{\otimes n}\, .$$
\mbox{}From $L^n(F^n)=(F^n)^{\otimes n}*\kappa_n\subseteq (F^n)^{\otimes n}$ one
thus gets
$$\LieF(n)=\mathfrak{W}^{n}((F^n)^{\otimes n}*\kappa_n)
=\mathfrak{W}^{n}((F^n)^{\otimes n})*\kappa_n
=\mathrm{Span}_F(\{[[e_{\pi(1)},\ldots,e_{\pi(n)}]\mid
 \pi\in\mathfrak{S}_n\}).  $$
Via the isomorphism 
$\mathfrak{W}^{n}((F^n)^{\otimes n})\cong F\mathfrak{S}_n$ 
of $F\mathfrak{S}_n$-modules, 
$\LieF(n)$ can be regarded as a submodule of the regular 
$F\mathfrak{S}_n$-module $F\mathfrak{S}_n$.
Since the action of $\kappa_n$ is induced by the place permutation action
of $\omega_n$,
we get
\begin{equation}\label{equ:LieF}
\LieF(n)\cong F\mathfrak{S}_n\cdot \omega_n \subseteq F\mathfrak{S}_n.
\end{equation}
Note that, in particular, $\LieF(1)\cong F$, the trivial
$F\mathfrak{S}_1$-module.

Moreover, we observe that $\LieF(n)$ is already realized over the
prime field $\mathbb{F}_p$ of $F$, that is, letting 
$$\Liep(n):=\mathbb{F}_p\mathfrak{S}_n\cdot\omega_n\subseteq 
  \mathbb{F}_p\mathfrak{S}_n 
\quad\text{as $\mathbb{F}_p\mathfrak{S}_n$-modules} ,$$
we get $\LieF(n)\cong F\otimes_{\mathbb{F}_p}\Liep(n)$
as $F\mathfrak{S}_n$-modules. We will make use of this in order
to facilitate explicit computations in Section \ref{sec:comp}.
\end{noth}

\begin{noth}\label{noth:variations}
{\bf Variations on Lie modules.}\,
Since there also exist slight modifications of the above modules
in the literature, we briefly comment on variations
of the construction:

\medskip
(a)\,
Firstly, starting with another vector 
$e_{\pi(1)}\otimes\cdots\otimes e_{\pi(n)}\in (F^n)^{\otimes n}$,
where $\pi\in\mathfrak{S}_n$, leads to
a different identification of $\LieF(n)$ with a submodule
of $F\mathfrak{S}_n$, namely to  the $F\mathfrak{S}_n$-module 
$F\mathfrak{S}_n\cdot \pi\omega_n\pi^{-1}$, that is,
amounts to a renumbering.

\medskip
(b)\,
Secondly, taking right-normed Lie brackets instead, for $r\geq 2$ one gets
$$ \kappa'_r: (F^n)^{\otimes r}\longrightarrow (F^n)^{\otimes r},\;
v_1\otimes\cdots\otimes v_r\longmapsto [v_1,v_2,\ldots, v_r]] :=
   [v_1,[v_2,\ldots,[v_{r-1,}v_r]\cdots]] ;$$
we again let $\kappa'_1:=\id$.
Since $[v_1,v_2,\ldots, v_r]]=(-1)^{r-1} \cdot [[v_r,v_{r-1},\ldots, v_1]$,
for $r\geq 1$, we get
$\kappa'_r=(-1)^{r-1}*w_r*\kappa_r: 
(F^n)^{\otimes r}\longrightarrow (F^n)^{\otimes r}$,
where $w_r\in\mathfrak{S}_r$ is the longest element of 
$\mathfrak{S}_r$ in the Coxeter sense, that is,
$$ w_r=(1,r)(2,r-1)\cdots $$
Thus this construction yields
$ \LieF'(n) = \mathfrak{W}^n((F^n)^{\otimes n})* \kappa'_n = \mathfrak{W}^n((F^n)^{\otimes n})*w_n*\kappa_n
= \mathfrak{W}^n((F^n)^{\otimes n})*\kappa_n = \LieF(n) .$

\medskip
(c)\,
Lastly, we analyze the $F\mathfrak{S}_n$-module 
$F\mathfrak{S}_n\cdot \omega_n^\iota\subseteq F\mathfrak{S}_n$, where
$$ \omega_n^\iota:=(1-c_n^{-1})(1-c_{n-1}^{-1})\cdots (1-c_2^{-1})
\in F\mathfrak{S}_n $$ 
is the image of $\omega_n$ under the $F$-algebra anti-automorphism 
$\iota:F\mathfrak{S}_n\longrightarrow F\mathfrak{S}_n$ defined
by $\iota:\pi\longmapsto \pi^{-1}$, for $\pi\in\mathfrak{S}_n$.
Then we have an isomorphism of left $F\mathfrak{S}_n$-modules
$$F\mathfrak{S}_n\cdot \omega_n^\iota\cong 
(F\mathfrak{S}_n\cdot\omega_n)^*,$$
where the latter $F\mathfrak{S}_n$-module denotes the contragredient
dual of $F\mathfrak{S}_n\cdot\omega_n$.
This fact is most elegantly established by recalling that
group algebras are, in particular, symmetric algebras, and using 
the general isomorphism (\ref{eqn symm alg}) in
Remark~\ref{rem:symm} below. Thus we briefly deviate to establish this:
\end{noth}

\begin{rem}\label{rem:symm}
Let $A$ be a finite-dimensional
symmetric $F$-algebra with symmetrizing $F$-bilinear form 
$\langle\cdot|\cdot\rangle$. That is, $\langle\cdot|\cdot\rangle$ is 
associative, symmetric, and non-degenerate. Hence 
\begin{equation*}\label{equ:symmiso}
A\longrightarrow \Hom_F(A,F),\; 
a\longmapsto (b\longmapsto \langle b|a\rangle),
\quad\text{for}\quad a,b\in A,
\end{equation*}
is an isomorphism of $(A,A)$-bimodules.
Letting $\omega\in A$,
this induces an isomorphism
$$\Hom_F(A\omega,F)\cong A/(A\omega)^\perp$$
of right $A$-modules,
where ${}^\perp$ denotes taking orthogonal spaces with respect to 
$\langle\cdot|\cdot\rangle$.
Moreover, 
$$(A\omega)^\perp
= \{a\in A\mid \langle A\omega|a\rangle=0\}
=\{a\in A\mid \langle A|\omega a\rangle=0\}
=\{a\in A\mid \omega a=0\}
=\mathrm{ker}(\omega\, \cdot\, ). $$
Since 
$A/\mathrm{ker}(\omega\,\cdot\,)\cong\im(\omega\,\cdot\,)=\omega A$, 
this yields an isomorphism of right $A$-modules
\begin{equation*}\label{equ:isoright}
\Hom_F(A\omega,F)\cong A/(A\omega)^\perp=
A/\mathrm{ker}(\omega\;\cdot\;)\cong \im(\omega\,\cdot\,)=\omega A.
\end{equation*}
Finally, suppose that there is an involutory $F$-algebra anti-automorphism 
$\iota:A\longrightarrow A,\;a\longmapsto a^\iota$. 
Then, whenever $M$ is a right $A$-module, one can define a left 
$A$-module structure on $M$ by
$a\cdot x:=xa^\iota$, for $x\in M,\, a\in A$.
Denoting the resulting module by $M^\iota$, one, in particular, gets
$(\omega A)^\iota\cong A\omega^\iota$ as left $A$-modules.
Thus one has an isomorphism of left $A$-modules
\begin{equation}\label{eqn symm alg}
\Hom_F(A\omega,F)^\iota \cong (\omega A)^\iota\cong A\omega^\iota \,.
\end{equation}
\end{rem}

\begin{noth}\label{noth:lieproperties}
{\bf Properties of Lie modules.}\,
We collect a couple of properties of Lie modules. 

\medskip
(a)\,
We exhibit an explicit $F$-basis of $\LieF(n)$.
Firstly, any element of $\LieF(n)$
is an $F$-linear combination of Lie brackets of the form
$[[e_n,e_{\pi(1)},\ldots,e_{\pi(n-1)}]$, 
where $\pi\in\mathfrak{S}_{n-1}$. This is clear for $n\leq 2$
anyway, and for $n\geq 3$ is seen as follows: letting $w$ be a Lie bracket 
involving a subset of $\{e_1,\ldots,e_{i-1},e_{i+1},\ldots,e_{n-1}\}$,
where $1\leq i\leq n-1$, we have $[[w,e_i,e_n]=[e_n,[e_i,w]]$,
where, by (\ref{equ:adjact}), the latter can be written as a sum of
Lie brackets having $e_n$ as their first component.

Now, since expanding $[[e_n,e_{\pi(1)},\ldots,e_{\pi(n-1)}]$
into the standard $F$-basis of $(F^n)^{\otimes n}$
yields a unique summand having $e_n$ as its first component,
namely $e_n\otimes e_{\pi(1)}\otimes\cdots\otimes e_{\pi(n-1)}$,
we conclude that 
$$ \{[[e_n,e_{\pi(1)},\ldots,e_{\pi(n-1)}]\mid \pi\in\mathfrak{S}_{n-1}\} 
   \subseteq \LieF(n) $$
is $F$-linearly independent, thus is an $F$-basis. Moreover, since 
$$ [[e_n,e_{\pi(1)},\ldots,e_{\pi(n-1)}]
=\pi\cdot[[e_n,e_1,\ldots,e_{n-1}]
=\pi\cdot c_n\cdot [[e_1,e_2,\ldots,e_n]
\quad\text{for}\quad\pi\in\mathfrak{S}_{n-1} ,$$
the above $F$-basis can also be written as
$$ \{\pi\cdot c_n\cdot [[e_1,\ldots,e_n]\mid \pi\in\mathfrak{S}_{n-1}\} 
   \subseteq \LieF(n) .$$

Thus $\res_{\mathfrak{S}_{n-1}}^{\mathfrak{S}_n}(\LieF(n))$ 
is isomorphic to the regular module $F\mathfrak{S}_{n-1}$,
in particular saying that $\dim_F(\LieF(n))=(n-1)!$.
Moreover, since 
$[[e_1,\ldots,e_n]=(e_1\otimes\cdots\otimes e_n)*\omega_n\in\LieF(n)$
is sent to $\omega_n\in F\mathfrak{S}_n$ 
via the isomorphism (\ref{equ:LieF}),
this means that an $F$-basis of $F\mathfrak{S}_n\cdot\omega_n$ 
is obtained as
$$ \{\pi\cdot c_n\cdot\omega_n\mid \pi\in\mathfrak{S}_{n-1}\} .$$
This $F$-basis will be particularly useful to facilitate the explicit
computations in Section \ref{sec:comp}; note that, by the observations
in \ref{noth:Lie}, this is even an $\mathbb{F}_p$-basis
of $\Liep(n)=\mathbb{F}_p\mathfrak{S}_n\cdot\omega_n$.

\medskip
(b)\,
We now show that $\omega_n^2=n\cdot\omega_n\in F\mathfrak{S}_n$,
in particular implying that $\frac{1}{n}\cdot\omega_n\in F\mathfrak{S}_n$
is an idempotent whenever $p\nmid n$. We proceed in various steps: 

Firstly, we show that for $n\geq 1$ we have
$\omega_n\cdot (e_1\otimes\cdots\otimes e_n)
  =[[e_1,\ldots,e_n]\in (F^n)^{\otimes n}$:
this is clear for $n=1$ anyway, and for $n=2$ we have
$\omega_2\cdot (e_1\otimes e_2)=(1-c_2)\cdot(e_1\otimes e_2)
=e_1\otimes e_2-e_2\otimes e_1=(e_1\otimes e_2)*\kappa_2$.
For $n\geq 3$, arguing by induction and using 
$\omega_n=\omega_{n-1}\cdot(1-c_n)\in F\mathfrak{S}_n$, we get
\begin{align*}
\omega_n\cdot (e_1\otimes\cdots\otimes e_n) &=
\omega_{n-1}\cdot 
(e_1\otimes\cdots\otimes e_n-e_n\otimes e_1\otimes\cdots\otimes e_{n-1}) \\
&=((e_1\otimes\cdots\otimes e_{n-1})*\kappa_{n-1})\otimes e_n
-e_n\otimes((e_1\otimes\cdots\otimes e_{n-1})*\kappa_{n-1}) \\
&=[(e_1\otimes\cdots\otimes e_{n-1})*\kappa_{n-1},e_n] \\
&=(e_1\otimes\cdots\otimes e_n)*\kappa_n =[[e_1,\ldots,e_n]\,.
\end{align*}

Secondly, we show that for $n\geq 2$ we have
$ \omega_{n-1}c_n\cdot[[e_1,\ldots,e_n]
=-[[e_1,\ldots,e_n]\in (F^n)^{\otimes n}$:
Recall that $\kappa_n\in\End_F((F^n)^{\otimes n})$ 
centralizes the $F[\GL_n(F)]$-action.
Then, using (\ref{equ:adjact}) we get
\begin{align*}
\omega_{n-1}c_n\cdot(e_1\otimes\cdots\otimes e_n)*\kappa_n &=
(e_n\otimes\omega_{n-1}\cdot(e_1\otimes\cdots\otimes e_{n-1}))*\kappa_n \\
&=(e_n\otimes(e_1\otimes\cdots\otimes e_{n-1})*\kappa_{n-1})*\kappa_n \\
&=[e_n,(e_1\otimes\cdots\otimes e_{n-1})*\kappa_{n-1}] \\
&=-[(e_1\otimes\cdots\otimes e_{n-1})*\kappa_{n-1},e_n] \\
&=-(e_1\otimes\cdots\otimes e_n)*\kappa_n .
\end{align*}

Combining these computations we get
$$ \omega_{n-1}c_n\cdot\omega_n\cdot (e_1\otimes\cdots\otimes e_n)
= -\omega_n\cdot (e_1\otimes\cdots\otimes e_n) ,$$
thus translating via the isomorphism 
$\mathfrak{W}^{n}((F^n)^{\otimes n})\cong F\mathfrak{S}_n$ yields 
$\omega_{n-1}c_n\cdot \omega_n= -\omega_n\in F\mathfrak{S}_n$.

Thirdly, and finally, we show that 
$\omega_n^2=n\cdot\omega_n\in F\mathfrak{S}_n$, for $n\geq 1$:
this is clear for $n=1$ anyway, and for $n=2$ we have 
$\omega_2^2=(1-c_2)^2=\omega_2-c_2\omega_2=2\omega_2$. Then,
for $n\geq 3$ we have 
$\omega_n^2=\omega_{n-1}(1-c_n)\omega_n
=\omega_{n-1}\omega_n-\omega_{n-1}c_n\omega_n$,
where, by induction, the first summand equals
$$ \omega_{n-1}\omega_n=\omega_{n-1}^2(1-c_n)
=(n-1)\omega_{n-1}(1-c_n)=(n-1)\omega_n .$$
The second summand being $-\omega_{n-1}c_n\omega_n=\omega_n$, this yields
$\omega_n^2=(n-1)\omega_n+\omega_n =n\omega_n$.
\end{noth}

\section{Computational Data}\label{sec:comp}

In this section we summarize our computational results 
concerning the Lie modules for some symmetric groups.
Before doing so, we need a few preparations.

\begin{noth}\label{noth:sylow}
{\bf Some subgroups of symmetric groups.}\,
(a)\,
We will use the following convention for denoting the Sylow $p$-subgroups
of the symmetric group $\mathfrak{S}_n$: suppose first that $n=p^d$, for
some $d\geq 0$. Moreover, we set $P_1:=1$ and $P_p:=C_p$, where
$C_p:=\langle (1,2,\ldots,p)\rangle$, as well as
$$P_{p^{i+1}}:=P_{p^i}\wr C_p=\{(g_1,\ldots,g_p;\sigma)\mid
  g_1,\ldots,g_p\in P_{p^i},\, \sigma\in C_p\}
\quad\text{for}\quad i\geq 1\,.$$
Note that the multiplication in $P_{p^{i+1}}$ is as explained in (\ref{equ:wreath multi}) below.
For $i\geq 0$, we view $P_{p^i}$ as a subgroup of $\mathfrak{S}_{p^i}$ in the
obvious way. Then, by \cite[4.1.22, 4.1.24]{JK}, $P_n$
is a Sylow $p$-subgroup of $\mathfrak{S}_n$, which 
can be generated by the following elements in $\mathfrak{S}_n$:
$$g_j:=\prod_{k=1}^{p^{j-1}}(k,k+p^{j-1},k+2p^{j-1},\ldots, k+(p-1)p^{j-1})
\quad\text{where}\quad j=1,\ldots,d \,. $$

Next suppose that $p\mid n$, but $n$ is not necessarily
a $p$-power. Consider the $p$-adic expansion
$n=\sum_{j=1}^s \alpha_jp^{i_j}$, for some $s\geq 1$, $i_1>\ldots >i_s\geq 1$,
and $1\leq \alpha_j\leq p-1$ for $j=1,\ldots,s$.
By \cite[4.1.22, 4.1.24]{JK},
$P_n:=\prod_{j=1}^s \prod_{l_j=1}^{\alpha_j}P_{p^{i_j},l_j}$
is then a Sylow $p$-subgroup of $\mathfrak{S}_n$. Here, the direct factor
$P_{p^{i_1},1}$ is acting on $\{1,\ldots,p^{i_1}\}$, $P_{p^{i_1},2}$ is acting
on $\{p^{i_1}+1,\ldots,2p^{i_1}\}$, and so on.

If $n$ is not divisible by $p$, let $m<n$ be maximal such that $p\mid m$,
and set $P_n:=P_m$, so that $P_n$ is a Sylow $p$-subgroup of
$\mathfrak{S}_n$ also in this case.

\medskip

(b)\,
For $d\geq 1$ we denote by $E_{p^d}$ the unique maximal
elementary abelian subgroup of $P_{p^d}$ that acts regularly
on $\{1,\ldots,p^d\}$; in particular, $|E_{p^d}|=p^d$.  
Letting $n:=p^d$ and $Q:=E_{p^d}$, we determine the structure of 
$N_{\mathfrak{S}_n}(Q)$:

Since $Q\leq\mathfrak{S}_n$ is an abelian transitive
subgroup, it is self-centralizing, and thus 
$N_{\mathfrak{S}_n}(Q)/Q$ is isomorphic to a subgroup of $\GL_d(p)$. 
Moreover, since the affine linear group $\text{AGL}_d(p)\cong C_p^d\rtimes\GL_d(p)$
acts transitively and faithfully on its elementary abelian subgroup $C_p^d$,
there is an embedding $\text{AGL}_d(p)\longrightarrow\mathfrak{S}_n$,
mapping $C_p^d$ to $Q$. Hence we conclude that 
$$ N_{\mathfrak{S}_n}(Q)\cong Q\rtimes\GL_d(p) .$$

Thus $N_{\mathfrak{S}_n}(Q)$ acts transitively on each of 
the sets $\{R\leq Q\mid |R|=p^i\}$, for $0\leq i\leq d$, and the
stabilizer $N_{N_{\mathfrak{S}_n}(Q)}(R)$ induces the
full automorphism group on any subgroup $R\leq Q$.
Hence $N_{\mathfrak{S}_n}(Q)$ controls fusion in $Q$, in the sense
of \ref{noth:endopperm}.
\end{noth}

\begin{expl}\label{expl:sylow}
If $p=2$ and $n=4$ then 
\begin{align*}
P_4&=\langle (1,2), (1,3)(2,4) \rangle\quad\text{and}\\
E_4&=\langle (1,2)(3,4), (1,3)(2,4) \rangle.
\end{align*}

If $p=2$ and $n=8$ then 
\begin{align*}
P_8&=\langle (1,2), (1,3)(2,4), (1,5)(2,6)(3,7)(4,8) \rangle\quad\text{and}\\
E_8&=\langle (1,2)(3,4)(5,6)(7,8), (1,3)(2,4)(5,7)(6,8), (1,5)(2,6)(3,7)(4,8) 
     \rangle.
\end{align*}

If $p=3$ and $n=9$ then 
\begin{align*}
P_9&=\langle (1,2,3), (1,4,7)(2,5,8)(3,6,9)\rangle \quad\text{and}\\
E_9&=\langle (1,2,3)(4,5,6)(7,8,9),(1,4,7)(2,5,8)(3,6,9)\rangle.
\end{align*}
\end{expl}

\begin{rem}\label{rem:lievertex}
(a)\,
Before proceeding to computationally substantial examples,
for later use we recall the following well-known case:
by \cite[Cor. 9]{ES} the projective-free part 
$\LieFpf(p)$ of the Lie module $\LieF(p)$ is 
indecomposable, and actually isomorphic to the 
Specht module $S^{(p-1,1)}$ of dimension $p-1$, hence 
$$ \LieFpf(p) \cong \Omega(F) 
\quad\text{ as $F\mathfrak{S}_p$-modules} \, .$$
Hence $\LieFpf(p)$ has vertex $E_p = C_p$, of course,
and $\Omega(F)$ is an $E_p$-source, having dimension $p-1$.
Note that the $FE_p$-module $\Omega(F)$ is an endo-permutation module. 
Thus, from Theorem \ref{thm:Dade}
and the remarks in \ref{noth:endopperm} and \ref{noth:sylow}(b) 
we conclude that $\LieFpf(p)$ is an endo-$p$-permutation module.

\medskip
(b)\,
In view of the subsequent results, we ask ourselves whether $\LieFpf(p)$
itself possibly is an endo-permutation $F\mathfrak{S}_p$-module. 
Indeed, for $p=2$ we have $\LieF(2)\cong\LieFpf(2)\cong F$, hence
$\LieFpf(2)$ is even a permutation $F\mathfrak{S}_2$-module. 

For $p=3$ we have $\LieF(3)\cong\LieFpf(3)\cong\Omega(F)$,
and from the theory of blocks of cyclic defect it is immediate that
$$\LieFpf(3)\otimes\LieFpf(3)^*\cong\Omega(F)\otimes\Omega(F)^*
\cong F\oplus P^{(2,1)} \,,$$
where $D^{(2,1)}$ is the sign representation. Note that,
in accordance with part~(a), all indecomposable direct summands of
$\LieFpf(3)\otimes\LieFpf(3)^*$ are trivial-source modules indeed.

To show that $\LieFpf(3)\otimes\LieFpf(3)^*$ is not a permutation
$F\mathfrak{S}_3$-module, assume to the contrary that it is.
Thus, by dimension reasons we conclude that $P^{(2,1)}$ is an
indecomposable transitive permutation $F\mathfrak{S}_3$-module, but
$P^{(2,1)}$ does not have the trivial module as an epimorphic image,
a contradiction.
\end{rem}

\begin{noth}\label{noth:computations}
{\bf Computational approach.}\,
We now give a description of the tools from computational
group theory and computational representation theory 
we are employing, and indicate the computational ideas
we are using to obtain the subsequent explicit results
concerning some larger Lie modules.
As a general background reference, see \cite{LuxPah}.

\medskip
(a)\,
To deal with finite groups, in particular permutation groups and 
matrix groups, we use the general purpose computer algebra 
systems {\sf GAP} \cite{GAP} and {\sf MAGMA} \cite{MAGMA}.
In particular, we make use of the character table library 
{\sf CTblLib} \cite{CTblLib} of {\sf GAP}, which provides 
electronic access to the data collected in the {\sf Atlas} \cite{Atlas} 
and in the {\sf ModularAtlas} \cite{ModularAtlas,ModularAtlasProject};
these databases, in particular, contain the explicit $2$- and $3$-modular
decomposition matrices for various symmetric groups given in 
\cite[App.]{J} or \cite[App. I]{JK}.

Moreover, we have used the more specialized computer algebra system 
{\sf MeatAxe} \cite{Par,MA}, and its extensions 
\cite{LuxMueRin,LuxSzokeII,LuxSzoke,LuxWie}, to deal with
various aspects concerning matrix representations over (small) finite fields.
Apart from general linear algebra, these tools, in particular, allow us
to find composition series and direct sum decompositions, 
including isomorphism checks of simple and indecomposable
modules, respectively, and
to find splitting fields and to check absolute indecomposability;
moreover, they enable us
to compute homomorphism spaces and endomorphism rings, 
to determine radical and socle series, and to compute submodule lattices;
apart from these analytic capabilities, they
also provide the constructions needed below, such as Kronecker
products and the computation of Heller translates.

\medskip

(b)\,
To facilitate explicit computations, we make use of the observation
in \ref{noth:Lie}, saying that 
$\LieF(n)\cong F\otimes_{\mathbb{F}_p}\Liep(n)$
as $F\mathfrak{S}_n$-modules, where 
$$\Liep(n):=\mathbb{F}_p\mathfrak{S}_n\cdot\omega_n\subseteq 
  \mathbb{F}_p\mathfrak{S}_n 
\quad\text{as $\mathbb{F}_p\mathfrak{S}_n$-modules}.$$
Thus we are indeed reduced to considerations of permutation representations,
and matrix representations over finite (prime) fields.

Having got hands on the $\mathbb{F}_p\mathfrak{S}_n$-module
$\Liep(n)$, the task then is to find the
decomposition $\Liep(n)\cong\Lieppf(n)\oplus\Lieppr(n)$ into
its projective-free and projective part, respectively, 
to determine how $\Lieppr(n)$ decomposes into projective indecomposable
modules, and what the indecomposable direct summands of $\Lieppf(n)$ look like. 
However, the examples of Lie modules $\Liep(n)$ to be dealt with here are 
too large to simply apply to them the general techniques available 
to compute direct sum decompositions. Hence we have to proceed otherwise 
to find $\Lieppf(n)$ in the first place; after all, by the asymptotic results
mentioned in the introduction, we expect $\Lieppf(n)$ to be small 
compared with $\Liep(n)$, small enough to allow for a detailed analysis.

By \cite{ET2}, we know that, in order to detect $\Lieppf(n)$,
we only need to consider the component $\Lieppbl(n)$ of $\Liep(n)$ 
belonging to the principal $p$-block of $\mathbb{F}_p\mathfrak{S}_n$. 
Using the $p$-modular decomposition matrix of 
$\mathfrak{S}_n$, which is available for all cases considered here,
and \cite[Cor.~3.4]{BE}, we may determine $\dim(\Lieppbl(n))$ in 
advance, and \cite[Thm. 3.1]{BE} also tells us the
projective indecomposable direct summands of $\Liep(n)/\Lieppbl(n)$, 
so that next to $\Lieppf(n)$ only the projective indecomposable 
direct summands of $\Lieppbl(n)$ have to be determined.

\medskip

(c)\,
To find an $\mathbb{F}_p$-basis of $\Liep(n)$ or $\Lieppbl(n)$ 
in the first place, let $\epsilon_n\in\mathbb{F}_p\mathfrak{S}_n$ 
be the centrally primitive idempotent belonging to the principal 
$p$-block of $\mathbb{F}_p\mathfrak{S}_n$; recall that $\epsilon_n$ 
can be computed from the ordinary character table of $\mathfrak{S}_n$.
We now use the observation in \ref{noth:lieproperties}, saying that
an $\mathbb{F}_p$-basis of $\Liep(n)$ is given as
$$ \{\pi\cdot c_n\cdot\omega_n\mid \pi\in\mathfrak{S}_{n-1}\} 
   \subseteq \Liep(n)\subseteq \mathbb{F}_p\mathfrak{S}_n\,,$$
implying that an $\mathbb{F}_p$-spanning set of $\Lieppbl(n)$ is given as
$$ \{\pi\cdot c_n\cdot\omega_n\cdot\epsilon_n \mid 
     \pi\in\mathfrak{S}_{n-1}\} \subseteq \Liep(n)\cdot\epsilon_n
   =\Lieppbl(n) \subseteq \mathbb{F}_p\mathfrak{S}_n \,.$$

Hence our starting point is the regular representation 
$\mathbb{F}_p\mathfrak{S}_n$ of $\mathfrak{S}_n$, 
being equipped with its natural $\mathbb{F}_p$-basis.
Determining the permutation action of elements 
of $\mathfrak{S}_n$ on coordinate vectors with respect to this basis
essentially amounts to computing with permutations in $\mathfrak{S}_n$. 
This allows us to apply successively all elements of $\mathfrak{S}_{n-1}$
to $c_n\cdot\omega_n\in\mathbb{F}_p\mathfrak{S}_n$
and $c_n\cdot\omega_n\cdot\epsilon_n\in\mathbb{F}_p\mathfrak{S}_n$,
respectively;
to do this efficiently, we first find a Schreier tree of
$\mathfrak{S}_{n-1}$ in terms of some generating set, 
our favourite one being $\{(1,\ldots,n-1),(1,2)\}$.

Thus having found an $\mathbb{F}_p$-basis of $\Liep(n)$,
we directly determine the action of a generating set of $\mathfrak{S}_n$,
our favourite one again being $\{(1,\ldots,n),(1,2)\}$.
For $\Lieppbl(n)$, before doing so, we pick an $\mathbb{F}_p$-basis out of
the $\mathbb{F}_p$-spanning set obtained.
These tasks are efficiently solved using the linear algebra routines 
available in the {\sf MeatAxe}. 

\medskip

(d)\,
Next we proceed to find the projective indecomposable
direct summands of $\Lieppbl(n)$. To do so,
we apply a technique based on the considerations in \cite{LuxMueRin}.
In order to describe this we first recall the relevant notions:

Let $A$ be a finite-dimensional $K$-algebra, where $K$ is any field,
and let $S$ be a simple $A$-module.
Then, the endomorphism algebra $\End_A(S)$ is a skew field, and 
for $a\in A$ letting $\ker(a_S)$ denote the kernel of the $K$-endomorphism 
of $S$ induced by the action of $a$, we have 
$\dim(\End_A(S))\mid\dim(\ker(a_S))$.
Now $a\in A$ is called an \emph{$S$-peakword}, if 
$\dim(\ker(a_S^2))=\dim(\End_A(S))$, and
$\ker(a_T)=\{0\}$ for all simple $A$-modules $T$ not isomorphic to $S$.
In particular, if $K$ is a splitting field of $A$, then the first 
condition just becomes $\dim(\ker(a_S^2))=\dim(\ker(a_S))=1$.
In practice, peakwords are found by a random search, yielding a
Monte Carlo method, which for the case of $K$ being a (small) 
finite field is available in the {\sf MeatAxe}.

Let now $M$ be an $A$-module, and let $a\in A$ be an $S$-peakword.
Then, by \cite[Thm.2.5]{LuxMueRin}, the set of all 
submodules $L$ of $M$ such that $L/\Rad(L)\cong S$
concides with the set of all cyclic submodules of $M$ generated 
by some $v\in\bigcup_{i\geq 1}\ker(a_M^i)\smallsetminus\{0\}$.
Thus, in particular, all submodules of $M$ isomorphic to 
the projective cover $P_S$ of $S$ are found this way,
and for a cyclic submodule $L$ as above 
we have $L\cong P_S$ if and only if $\dim(L)=\dim(P_S)$.
Thus, if $A$ is a self-injective algebra, 
a random search through $\bigcup_{i\geq 1}\ker(a_M^i)$
yields a Monte Carlo method to find a largest direct summand
of $M$ being the direct sum of copies of $P_S$.
Note that $\dim(P_S)$ is indeed known in advance in all explict cases 
considered here, and that, if $K$ is a (small) finite field, then
techniques to compute cyclic submodules are available in the {\sf MeatAxe}.

\medskip

(e)\,
Thus, quotienting out the projective direct summands found,
we now have $\Lieppf(n)$ in our hands, at least with high probability.
In all cases considered here this module turns out to be small enough 
to apply to it the general techniques available in the {\sf MeatAxe}
to find direct sum decompositions. 
The latter techniques would also find a projective direct summand left over,
thus providing a verification of the above Monte Carlo results.
Actually, for the examples to be discussed below, this even shows that
$\Lieppf(n)$ is indecomposable and non-projective. 

Hence we may now assume that we have got 
a non-projective indecomposable $\mathbb{F}_p\mathfrak{S}_n$-module $M$,
for which we have to find a vertex and a source.
In order to do so,
we consider the restriction $\res_{P_n}^{\mathfrak{S}_n}(M)$ 
of $M$ to the Sylow $p$-subgroup $P_n$ of $\mathfrak{S}_n$;
recall from \ref{noth:vertex} that, 
since $M$ is relatively $P_n$-projective,
$\res_{P_n}^{\mathfrak{S}_n}(M)$ has an indecomposable direct
summand sharing a vertex and a source with $M$.

Hence we may assume that $M$ is an $\mathbb{F}_p P$-module, 
where $P$ is a $p$-group. 
Again, we have to find direct sum decompositions,
which can be speeded up by detecting particular
indecomposable direct summands beforehand.
Namely, in case of an $\mathbb{F}_p P$-module 
the strategy described in part~(d) specializes to the following:
the set of all submodules $L$ of $M$ such that $L/\Rad(L)$ is simple
is precisely the set of all non-zero cyclic submodules of $M$.
(Note that, in terms of the language used above,
since the trivial module is the only simple $\mathbb{F}_p P$-module,
the zero element in $\mathbb{F}_p P$ is a peakword.)
This leads to a straightforward Monte Carlo method to find a 
largest direct summand of $M$ that is the direct sum of copies of 
the regular module $\mathbb{F}_p P$; see \cite[Sect. 3.2]{DKZ}.

Quotienting
out projective direct summands 
we again, in all cases considered here, end up with a
module whose direct sum decomposition can be computed
with the general techniques available in the {\sf MeatAxe}.

Hence we may finally assume that $M$ is an
indecomposable non-projective  $\mathbb{F}_p P$-module 
such that we are in a position to use the techniques described 
in \cite[Sect. 3.1]{DKZ}, whose basic ingredient is Higman's 
Criterion for relative projectivity. An implementation is
available in {\sf MAGMA}, which, in particular, employs its 
facilities to compute with finite $p$-groups,
for example to determine subgroup lattices,
and sets of subgroup coset representatives. 

\medskip
(f)\,
To make sure that computational results are still valid when
going over to the algebraically closed field $F$ again, 
we always check that the indecomposable modules found 
are actually absolutely indecomposable; techniques to 
achieve that are available in the {\sf MeatAxe}.
Recall that it is well known that $\mathbb{F}_p$ is a 
splitting field of $\mathbb{F}_p\mathfrak{S}_n$, hence
absolute indecomposability is automatic anyway for the 
simple modules and the projective indecomposable modules found.

\end{noth}

\begin{noth}\label{noth:lie4}
{\bf Examining $\Liet(4)$.}\,
Let $p=2$. We examine the $\mathbb{F}_2\mathfrak{S}_4$-module $\Liet(4)$.

\medskip

(a)\,
Recall from Remark \ref{noth:lieproperties}(a) that $\dim(\Liet(4))=3!=6$. 
A dimension consideration shows that $\Liet(4)$ cannot
possibly contain a projective direct summand, hence $\Liet(4)$ coincides
with its projective-free part $\Liepft(4)$. Moreover, it is easily checked computationally,
that $\Liepft(4)$ is absolutely indecomposable, namely
$$ \Liepft(4)\cong \Omega^{-1}(D^{(3,1)}) \,,$$
where $D^{(3,1)}\cong\Inf_{\mathfrak{S}_3}^{\mathfrak{S}_4}(D^{(2,1)})$
is the simple $\mathbb{F}_2\mathfrak{S}_4$-module 
of dimension $2$, and the inflation is along the natural map
$\mathfrak{S}_4/E_4\cong\mathfrak{S}_3$.
Since $D^{(2,1)}$ is a projective simple $\mathbb{F}_2\mathfrak{S}_3$-module, 
$D^{(3,1)}$ is a trivial-source module with vertex $E_4$.
Thus we conclude that $\Liepft(4)$ has vertex $E_4$, and 
$\Omega^{-1}(\mathbb{F}_2)=\mathbb{F}_2E_4/\Soc(\mathbb{F}_2E_4)$ 
is an $E_4$-source, having dimension $3$.

Note that $\Omega^{-1}(\mathbb{F}_2)$ is an endo-permutation
module. Thus from Theorem \ref{thm:Dade}, 
and the remarks in \ref{noth:endopperm} and \ref{noth:sylow}(b), 
we conclude that $\Liepft(4)$ is an endo-$p$-permutation module.

\medskip

(b)\,
In view of the above and the subsequent results, it seems worthwhile to 
show that $\Liepft(4)$ is \emph{not} an endo-permutation
$\mathbb{F}_2\mathfrak{S}_4$-module. To this end, we 
compute an explicit indecomposable direct sum decomposition of 
$\Liepft(4)\otimes\Liepft(4)^*$:
\begin{equation}\label{equ:Lie4tens}
 \Liepft(4)\otimes \Liepft(4)^* \cong
D^{(3,1)} \oplus \ind_{\mathfrak{A}_4}^{\mathfrak{S}_4}(\mathbb{F}_2)
\oplus P^{(4)} \oplus 3\cdot P^{(3,1)} \,, 
\end{equation}
where both projective indecomposable $\mathbb{F}_2\mathfrak{S}_4$-modules
$P^{(4)}$ and $P^{(3,1)}$ have dimension $8$.
Note that, in accordance with part~(a), we indeed observe that all 
indecomposable direct summands of
$\Liepft(4)\otimes\Liepft(4)^*$ are trivial-source modules.

To show that $\Liepft(4)\otimes\Liepft(4)^*$ is not a permutation 
$\mathbb{F}_2\mathfrak{S}_4$-module, assume to the contrary that it is.
Then there is some $H\leq\mathfrak{S}_4$ such that $D^{(3,1)}$ is 
isomorphic to a direct summand of $\ind_H^{\mathfrak{S}_4}(\mathbb{F}_2)$ 
and such that $\ind_H^{\mathfrak{S}_4}(\mathbb{F}_2)$ is isomorphic 
to a direct summand of $\Liepft(4)\otimes \Liepft(4)^*$. 
In particular, $D^{(3,1)}$ is then relatively $H$-projective,
and since, by (a), $E_4\unlhd \mathfrak{S}_4$ is a vertex of $D^{(3,1)}$,
we infer $E_4\leq H$.
On the other hand, $H$ cannot possibly contain a Sylow $2$-subgroup of 
$\mathfrak{S}_4$, since otherwise $\ind_H^{\mathfrak{S}_4}(\mathbb{F}_2)$
has the trivial $\mathbb{F}_2\mathfrak{S}_4$-module as a direct summand.
This leaves the cases $H\in\{E_4,\mathfrak{A}_4\}$. 
But if $H=\mathfrak{A}_4$ then
$D^{(3,1)}\nmid \ind_H^{\mathfrak{S}_4}(\mathbb{F}_2)$, and if $H=E_4$ then
$2\cdot D^{(3,1)}\mid \ind_H^{\mathfrak{S}_4}(\mathbb{F}_2)$. 
In any case, we obtain a contradiction.
%
\end{noth}

\begin{noth}\label{noth:lie8}
{\bf Examining $\Liet(8)$.}\,
Let $p=2$. We examine the $\mathbb{F}_2\mathfrak{S}_8$-module $\Liet(8)$.

\medskip

(a)\,
Recall from Remark \ref{noth:lieproperties}(a) that $\dim(\Liet(8))=7!=5040$.
Moreover, using the $2$-modular decomposition matrix of $\mathfrak{S}_8$
and \cite[Cor.~3.4]{BE}, we find $\dim(\Liepblt(8))=4016$. 
By work of Selick--Wu \cite{SW}, it is known that
\begin{equation}\label{equ:Lie8dec}
\Liepblt(8)\cong \Liepft(8)\oplus 2\cdot P^{(6,2)}
                 \oplus P^{(5,3)}\oplus 4\cdot P^{(4,3,1)} \,,
\end{equation}
so that we infer that $\dim(\Liepft(8))=816=2^4\cdot 3\cdot 17$. 
We have verified the decomposition (\ref{equ:Lie8dec}) independently,
with the computational techniques described in \ref{noth:computations}.
In addition to the calculations in \cite{SW}, 
we have checked explicitly that $\Liepft(8)$ is actually 
absolutely indecomposable.

\medskip

(b)\,
We will subsequently describe the vertices and sources of 
the projective-free part $\Liepft(8)$.
In order to do so, we consider the restriction of $\Liepft(8)$
to the Sylow $2$-subgroup $P_8$ of $\mathfrak{S}_8$; 
note that, since $|P_8|=2^7$, from \ref{noth:vertex} we conclude
that every vertex of $\Liepft(8)$ has order at least $8$. 
Our computations yield the following decomposition:
\begin{equation*}\label{equ:Lie8P8}
\res_{P_8}^{\mathfrak{S}_8}(\Liepft(8))=M_1\oplus M_2\oplus \mathrm{(cyc)},
\end{equation*} 
where `$\mathrm{(cyc)}$' denotes a direct sum of absolutely
indecomposable $\mathbb{F}_2P_8$-modules with vertex $Z(P_8)$ of order 2,
and with trivial sources.

The direct summand $M_2$ is absolutely indecomposable of dimension 96,
and has vertex 
$$V:=\langle (1,3)(2,4)(5,6)(7,8),(1,4)(2,3)(5,8)(6,7)\rangle
  \cong C_2\times C_2 $$
of order $4$, and a $V$-source isomorphic to 
$\mathbb{F}_2V/\Soc(\mathbb{F}_2V)\cong\Omega^{-1}(\mathbb{F}_2)$. 
In particular, the sources of $M_2$ are endo-permutation modules.

The remaining direct summand, $M_1$, is absolutely indecomposable of dimension 336,
and has vertex $E_8$ and an $E_8$-source $S$ of dimension 21 satisfying
$$\End_{\mathbb{F}_2}(S)\cong 
S\otimes S^*\cong \mathbb{F}_2\oplus\bigoplus_{Q<E_8,\,|Q|=2} 
2\cdot \ind_Q^{E_8}(\mathbb{F}_2)\oplus\mathrm{(proj)},$$
where `$\mathrm{(proj)}$' denotes a projective $\mathbb{F}_2E_8$-module. 
Consequently, $\End_{\mathbb{F}_2}(S)$ is a permutation 
$\mathbb{F}_2E_8$-module, that is, $S$ is an endo-permutation
$\mathbb{F}_2E_8$-module. In fact, by Theorem \ref{thm:Dade}, 
the isomorphism type of $S$ is determined by the following isomorphism,
which is easily verified computationally, using the techniques in \ref{noth:computations}:
$$\Omega^3(\mathbb{F}_2)\otimes\bigotimes_{Q<E_8,\,|Q|=2}
  \Inf_{E_8/Q}^{E_8}(\Omega^{-1}((\mathbb{F}_2)_{E_8/Q}))\cong 
S\oplus\mathrm{(proj)}.$$
Note that $S$ is the only non-projective direct summand occurring.

In conclusion, this shows that $\Liepft(8)$ has vertex $E_8$ and
endo-permutation source $S$. 
In particular, by \ref{noth:endopperm} and \ref{noth:sylow}(b),
we conclude that $\Liepft(8)$ is an endo-$p$-permutation module.
\end{noth}

\begin{noth}\label{noth:lie9}
{\bf Examining $\Lieth(9)$.}\,
Next let $p=3$. We examine the $\mathbb{F}_3\mathfrak{S}_9$-module 
$\Lieth(9)$.

\medskip

(a)\,
Recall from Remark \ref{noth:lieproperties}(a) that 
$\dim(\Lieth(9))=8!=40320$.
Moreover, using the $3$-modular decomposition matrix of $\mathfrak{S}_9$
and \cite[Cor.~3.4]{BE}, we find $\dim(\Liepblth(9))=16020$. 
Employing the techniques described in \ref{noth:computations},
we obtain the following decomposition
\begin{align*}
\Liepblth(9)&\cong 
2\cdot P^{(7,1^2)}\oplus 
5\cdot P^{(6,3)}  \oplus 
3\cdot P^{(6,2,1)}\oplus 
4\cdot P^{(5,2^2)}\oplus 
2\cdot P^{(4,3,2)}\oplus 
       P^{(4^2,1)}\oplus 
4\cdot P^{(3^2,2,1)} \\ 
 &\oplus\Liepfth(9),
\end{align*}
where hence $\Liepfth(9)$ has dimension
$1683=3^2\cdot 11\cdot 17$, and turns out to be absolutely indecomposable.

\medskip

(b)\, 
To describe the vertices and sources of the projective-free part 
$\Liepfth(9)$, we first note that from $|P_9|=3^4$ and \ref{noth:vertex} 
we conclude that every vertex of $\Liepfth(9)$ has order at least $9$. 
We determine an indecomposable direct sum  decomposition
of the restriction of $\Liepfth(9)$ to $P_9$, and get
$$\res_{P_9}^{\mathfrak{S}_9}(\Liepfth(9))\cong 
  N_1\oplus 2\cdot N_2\oplus 4\cdot N_3\oplus\mathrm{(proj)},$$
where $N_2\not\cong N_3$ are absolutely indecomposable of dimension 54 each,
having non-conjugate cyclic vertices of order 3, and 
endo-permutation sources of dimension 2.

The direct summand $N_1$ is absolutely indecomposable of dimension 144,
and has vertex $E_9$ and an $E_9$-source $S'$ of dimension 16 satisfying
$$\End_{\mathbb{F}_3}(S')\cong S'\otimes (S')^*\cong 
  \mathbb{F}_3\oplus \bigoplus_{Q<E_9,\,|Q|=3}\ind_Q^{E_9}(\mathbb{F}_3)
  \oplus\mathrm{(proj)}.$$
Consequently, $\End_{\mathbb{F}_3}(S')$ is a permutation
$\mathbb{F}_3E_9$-module, that is, $S'$ is an endo-permutation
$\mathbb{F}_3E_9$-module. Its isomorphism type, in the
sense of Theorem \ref{thm:Dade}, is determined by the following isomorphism,
where again $S'$ is the only non-projective direct summand occurring:
$$\Omega^{-2}(\mathbb{F}_3)\otimes\bigotimes_{Q<E_9,\,|Q|=3} 
  \Inf_{E_9/Q}^{E_9}(\Omega((\mathbb{F}_3)_{E_9/Q}))
  \cong S'\oplus\mathrm{(proj)}.$$
Also this decomposition is verified computationally 
via the techniques described in \ref{noth:computations}.

In conclusion, this shows that $\Liepfth(9)$ has vertex $E_9$ and
endo-permutation source $S'$. 
In particular, by \ref{noth:endopperm} and \ref{noth:sylow}(b),
we conclude that $\Liepfth(9)$ is an endo-$p$-permutation module.
\end{noth}

\section{A Reduction Theorem}\label{sec:reduction}

The aim of this section is to establish Theorem~\ref{thm:vertexLie},
which will allow for a partial reduction of the question concerning
vertices and sources of indecomposable direct summands
of $\mathrm{Lie}_p(n)$ to the case where $n$ is a $p$-power.
The key ingredients will be Theorem~\ref{prop:BE},
and the results in \cite{Ku} on vertices
of indecomposable modules of wreath products.
Therefore, we start out by collecting a number of facts on wreath
products and their representations, which we will then apply
in the context of Lie modules.

\begin{noth}\label{noth:wreathmodules}
{\bf Wreath products and their modules.}\,
(a)\, 
Let $G$ be a finite group, and consider the wreath product
$$G\wr \mathfrak{S}_n:=\{(g_1,\ldots,g_n; \sigma)
  \mid g_1,\ldots,g_n\in G,\, \sigma\in\mathfrak{S}_n\}.$$
Recall that the multiplication in $G\wr \mathfrak{S}_n$ is given by
\begin{equation}\label{equ:wreath multi}
(g_1,\ldots,g_n;\sigma)(h_1,\ldots,h_n;\pi)
=(g_1h_{\sigma^{-1}(1)},\ldots, g_nh_{\sigma^{-1}(n)};\sigma\pi)\,,
\end{equation}
for $g_1,\ldots,g_n,h_1,\ldots,h_n\in G$ and $\sigma,\pi\in\mathfrak{S}_n$.
Hence we have the natural epimorphism
$$\overline{\rule{0em}{0.5em}\rule{0.5em}{0em}}: 
G\wr \mathfrak{S}_n\longrightarrow \mathfrak{S}_n,\; 
(g_1,\ldots,g_n;\sigma)\longmapsto\sigma\,.$$
We denote by $G^n$ the {\it base group} of $G\wr\mathfrak{S}_n$, that is,  
$$G^n=\{(g_1,\ldots,g_n;1)\mid g_1,\ldots, g_n\in G\}
\unlhd G\wr\mathfrak{S}_n .$$ 
Moreover, letting $\sigma^\sharp:=(1,\ldots,1;\sigma)\in G\wr\mathfrak{S}_n$,
for $\sigma\in\mathfrak{S}_n$, we get an isomorphism
$$\mathfrak{S}_n^\sharp:=\{\sigma^\sharp\mid 
   \sigma\in\mathfrak{S}_n\}\cong \mathfrak{S}_n ;$$
note that the map $(-)^\sharp: \mathfrak{S}_n\longrightarrow G\wr\mathfrak{S}_n$ 
is a section for the natural epimorphism 
$\overline{\rule{0em}{0.5em}\rule{0.5em}{0em}}: 
G\wr \mathfrak{S}_n\longrightarrow \mathfrak{S}_n$.
More generally, if $H\leq G$ and $U\leq \mathfrak{S}_n$ then
we further set 
$U^\sharp:=\{\sigma^\sharp\mid \sigma\in U\}\leq \mathfrak{S}_n^\sharp$, 
as well as 
$H^n:=\{(g_1,\ldots,g_n;1)\mid g_1,\ldots,g_n\in H\}\leq G^n$, 
and 
$$ H\wr U:=\{(g_1,\ldots,g_n;\sigma)\mid g_1,\ldots,g_n\in H,\, \sigma\in U\}
 \leq G\wr\mathfrak{S}_n .$$

\medskip
(b)\, Let $M$ be an $FG$-module.
Then the (outer) tensor product 
$M^{\otimes n}=M\otimes\cdots\otimes M$ 
becomes an $F[G\wr\mathfrak{S}_n]$-module via
$$(g_1,\ldots,g_n;\sigma)(x_1\otimes\cdots\otimes x_n):=
g_1x_{\sigma^{-1}(1)}\otimes\cdots\otimes g_nx_{\sigma^{-1}(n)},$$
for $g_1,\ldots,g_n\in G$, $\sigma\in\mathfrak{S}_n$, and 
$x_1,\ldots,x_n\in M$.
This module is called a {\it tensor-induced} module.

\mbox{}From now on, 
we denote by $\Lambda(m,n)$ the set of compositions of $n$
with at most $m$ non-zero parts.
If $\lambda=(\lambda_1,\ldots,\lambda_m)\in\Lambda(m,n)$ 
then we
denote by $\mathfrak{S}_\lambda$ the corresponding (standard) Young subgroup 
$\mathfrak{S}_{\lambda_1}\times\cdots\times \mathfrak{S}_{\lambda_m}$
of $\mathfrak{S}_n$. With this notation, 
$$ (G\wr\mathfrak{S}_{\lambda_1})\times\cdots
\times (G\wr\mathfrak{S}_{\lambda_m})\cong G\wr\mathfrak{S}_\lambda\leq G\wr\mathfrak{S}_n .$$
Thus, if $M_1,\ldots,M_m$ are $FG$-modules, the (outer) tensor product 
\begin{equation*}
M^{\otimes\lambda}:=
M_1^{\otimes\lambda_1}\otimes\cdots\otimes M_m^{\otimes\lambda_m}
\end{equation*}
 carries
an $F[G\wr\mathfrak{S}_\lambda]$-module structure.

Moreover, suppose again that $H\leq G$ and $U\leq\mathfrak{S}_n$, and let $L$ be an $FU$-module.
Then, via the map $(-)^\sharp$, the $FU$-module $L$ can
be viewed as an $FU^\sharp$-module, 
which we denote by $L^\sharp$.
Via inflation along the natural epimorphism 
$\overline{\rule{0em}{0.5em}\rule{0.5em}{0em}}$,
the $FU$-module $L$ becomes also an 
$F[H\wr U]$-module, which we denote by 
$\widehat{L}:=\Inf^{H\wr U}_{U}(L)$.
Thus we have 
$\res^{H\wr U}_{U^\sharp}(\widehat{L}) 
 = L^\sharp$.

\medskip
(c)\, Let $N$ be an $F\mathfrak{S}_n$-module, and again let $M$ be an $FG$-module.
In this section, we will describe vertices 
and sources of indecomposable direct summands of the 
$F[G\wr\mathfrak{S}_n]$-module $M^{\otimes n}\otimes \hatN$ in terms of those
of the indecomposable direct summands of $M$ and $N$. We, therefore,
recall the structure of the indecomposable direct summands of the 
$F[G\wr\mathfrak{S}_n]$-modules
$M^{\otimes n}$ and $\hatN$, respectively:

Let $N_1,\ldots,N_l$ be pairwise non-isomorphic indecomposable
$F\mathfrak{S}_n$-modules, and $b_1,\ldots,b_l\in\mathbb{N}$ be such that
$N\cong b_1N_1\oplus\cdots\oplus b_lN_l$. Then we get
\begin{equation*}
\hatN=\Inf_{\mathfrak{S}_n}^{G\wr\mathfrak{S}_n}(N)\cong 
\bigoplus_{i=1}^l b_i\Inf_{\mathfrak{S}_n}^{G\wr\mathfrak{S}_n}(N_i) 
=\bigoplus_{i=1}^l b_i\hatN_i,
\end{equation*}
where the $F[G\wr\mathfrak{S}_n]$-modules 
$\hatN_i:=\Inf_{\mathfrak{S}_n}^{G\wr\mathfrak{S}_n}(N_i)$
are pairwise non-isomorphic and indecomposable.
Thus, the indecomposable direct summmands of the $F\mathfrak{S}_n$-module 
$N$ and those of the $F[G\wr\mathfrak{S}_n]$-module $\hatN$ are in natural
bijection, and hence in the sequel we may assume that $N$ is
indecomposable.

As for $M^{\otimes n}$, 
let $M_1,\ldots,M_m$ be pairwise non-isomorphic indecomposable 
$FG$-modules, and let $a_1,\ldots,a_m\in\mathbb{N}$ be such that 
\begin{equation*}
M\cong a_1M_1\oplus\cdots\oplus a_mM_m\,.
\end{equation*}
Then we have the following well-known result;
we include a proof for the readers' convenience.
\end{noth}

\begin{lemma}\label{lemma:directsum}
With the notation as in \ref{noth:wreathmodules}(c),
\begin{equation*}\label{equ:decX}
M^{\otimes n}\cong\bigoplus_{\lambda\in\Lambda(m,n)}c_\lambda\cdot 
\ind_{G\wr\mathfrak{S}_{\lambda}}^{G\wr\mathfrak{S}_n}
(M_1^{\otimes \lambda_1}\otimes\cdots\otimes M_m^{\otimes \lambda_m})
\end{equation*}
is an indecomposable direct sum decomposition of the 
$F[G\wr\mathfrak{S}_n]$-module
$M^{\otimes n}$,
for suitable $c_\lambda\in\mathbb{N}$. 
\end{lemma}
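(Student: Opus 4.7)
The plan is to expand the tensor power via distributivity of $\otimes$ over $\oplus$ and then to organise the resulting summands by the place-permutation action of $\mathfrak{S}_n^\sharp$. Writing $M \cong \bigoplus_{i=1}^m a_i M_i$ gives, as an $FG^n$-module,
$$M^{\otimes n} \;\cong\; \bigoplus_{(i_1,\ldots,i_n)\in\{1,\ldots,m\}^n} (a_{i_1} M_{i_1}) \otimes \cdots \otimes (a_{i_n} M_{i_n}) \,,$$
and the action of $\sigma^\sharp \in \mathfrak{S}_n^\sharp$ on $M^{\otimes n}$ permutes these summands, sending the one labelled by $(i_1,\ldots,i_n)$ onto the one labelled by $(i_{\sigma^{-1}(1)},\ldots,i_{\sigma^{-1}(n)})$.

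The orbits of $\mathfrak{S}_n$ on $\{1,\ldots,m\}^n$ are naturally parametrised by $\Lambda(m,n)$ via the content map $(i_1,\ldots,i_n) \mapsto (\lambda_1,\ldots,\lambda_m)$ with $\lambda_i = |\{l : i_l = i\}|$. For each $\lambda$ the canonical orbit representative, whose first $\lambda_1$ entries equal $1$, whose next $\lambda_2$ equal $2$, and so on, has stabiliser exactly $\mathfrak{S}_\lambda$ in $\mathfrak{S}_n$. A standard orbit-sum argument then identifies the $F[G\wr\mathfrak{S}_n]$-submodule of $M^{\otimes n}$ spanned by the summands of content $\lambda$ as an induced module from $G\wr\mathfrak{S}_\lambda$, the inducing module being the $FG^n$-summand $(a_1 M_1)^{\otimes \lambda_1} \otimes \cdots \otimes (a_m M_m)^{\otimes \lambda_m}$ at the canonical representative. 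The inducing module further decomposes as a $G\wr\mathfrak{S}_\lambda \cong \prod_i(G\wr\mathfrak{S}_{\lambda_i})$-module, and combining that decomposition with the outer induction yields the asserted expression with non-negative multiplicities $c_\lambda$.

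The main obstacle is verifying that each $\ind_{G\wr\mathfrak{S}_\lambda}^{G\wr\mathfrak{S}_n}(M_1^{\otimes \lambda_1} \otimes \cdots \otimes M_m^{\otimes \lambda_m})$ really is indecomposable. I would proceed in three steps: first, each tensor-induced module $M_i^{\otimes \lambda_i}$ is indecomposable as an $F[G\wr\mathfrak{S}_{\lambda_i}]$-module by the classical result on tensor-induced modules, so that its endomorphism ring is local; second, the pairwise non-isomorphism of $M_1,\ldots,M_m$ implies that their outer tensor product is indecomposable as an $F[G\wr\mathfrak{S}_\lambda]$-module via the product decomposition of $G\wr\mathfrak{S}_\lambda$; and finally, computing the endomorphism ring of the induced module via Mackey's formula, the pairwise non-isomorphism of the $M_i$ forces every non-identity $(\mathfrak{S}_\lambda,\mathfrak{S}_\lambda)$-double-coset contribution to vanish, so that the endomorphism ring of the induced module also remains local.
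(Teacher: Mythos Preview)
Your decomposition of $M^{\otimes n}$ into induced modules is correct and essentially matches the paper's argument: organise the pure-tensor summands by content $\lambda\in\Lambda(m,n)$, observe that each content class is a transitive imprimitive $F[G\wr\mathfrak{S}_n]$-module, and identify it with $\ind_{G\wr\mathfrak{S}_\lambda}^{G\wr\mathfrak{S}_n}(M^{\otimes\lambda})$.

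The gap is in your Step~3. The claim that every non-identity $(\mathfrak{S}_\lambda,\mathfrak{S}_\lambda)$-double-coset contribution to the Mackey decomposition of the endomorphism ring vanishes is false in general. For a representative $\sigma^\sharp$ with $\sigma\notin\mathfrak{S}_\lambda$, the relevant Hom-space restricted to $G^n$ is $\bigotimes_{l=1}^n\Hom_G(M_{i_l},M_{i_{\sigma(l)}})$; while some tensor factor has $M_{i_l}\not\cong M_{i_{\sigma(l)}}$, there is no reason for $\Hom_G$ between two non-isomorphic indecomposables to be zero (take for instance $G=C_p$, $M_1=F$, $M_2=FC_p$). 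What does survive is that any such map, viewed over $G^n$, is a non-isomorphism between indecomposable $FG^n$-modules and hence non-invertible; one then shows that these off-diagonal pieces generate a nilpotent ideal, so the endomorphism ring is still local. The paper bypasses this by recording that $\res^{G\wr\mathfrak{S}_\lambda}_{G^n}(M^{\otimes\lambda})$ is indecomposable with inertial group in $G\wr\mathfrak{S}_n$ equal to $G\wr\mathfrak{S}_\lambda$, and then invoking K\"ulshammer \cite[Prop.~4.1]{Ku} for the indecomposability of the induced module. Replacing your vanishing claim by either the radical argument sketched above or by this inertial-group citation closes the gap.
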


\begin{proof}
We have an isomorphism of $F[G\wr \mathfrak{S}_n]$-modules
\begin{equation*}\label{equ:M}
M^{\otimes n}\cong 
\bigoplus_{\lambda=(\lambda_1,\ldots,\lambda_m)\in\Lambda(m,n)}
c_\lambda\cdot(\bigoplus \tildeM_1\otimes\cdots \otimes \tildeM_n),
\end{equation*}
the inner sum being taken over all $n$-tuples $(\tildeM_1,\ldots,\tildeM_n)$
of $FG$-modules satisfying 
$$|\{1\leq j\leq n\mid \tildeM_j=M_i\}|=\lambda_i, 
\quad\text{for}\quad i=1,\ldots,m .$$
The respective coefficient $c_\lambda$ equals 
$a_1^{\lambda_1}\cdots a_m^{\lambda_m}$.

Given $\lambda=(\lambda_1,\ldots,\lambda_m)\in\Lambda(m,n)$, the sum
$\bigoplus (\tildeM_1\otimes\cdots\otimes \tildeM_n)$ is a transitive 
imprimitive $F[G\wr\mathfrak{S}_n]$-module, and
the direct summands $\tildeM_1\otimes\cdots\otimes \tildeM_n$ 
form a system of imprimitivity.
One of these direct summands equals 
$M^{\otimes\lambda}=
M_1^{\otimes\lambda_1}\otimes\cdots\otimes M_m^{\otimes\lambda_m}$.
Its restriction to the base group $G^n$ of $G\wr\mathfrak{S}_n$ 
is indecomposable, and
its inertial group in $G\wr\mathfrak{S}_n$ equals $G\wr\mathfrak{S}_\lambda$.
Thus, by \cite[50.2]{CR0}, we deduce that 
$$\bigoplus \tildeM_1\otimes\cdots\otimes \tildeM_n\cong 
\ind_{G\wr\mathfrak{S}_\lambda}^{G\wr\mathfrak{S}_n}(M^{\otimes\lambda})$$
and, by \cite[Prop. 4.1]{Ku},
$\ind_{G\wr\mathfrak{S}_\lambda}^{G\wr\mathfrak{S}_n}(M^{\otimes\lambda})$
is an indecomposable $F[G\wr\mathfrak{S}_n]$-module.
\end{proof}

\begin{noth}\label{noth:L}
{\bf Wreath products and vertices.}\,
We retain the notation from \ref{noth:wreathmodules}(c).
In particular, we suppose that $N$ is an indecomposable $F\mathfrak{S}_n$-module.
We now want to examine the vertices and sources of the 
indecomposable direct summands of the $F[G\wr \mathfrak{S}_n]$-module
$M^{\otimes n}\otimes \widehat{N}$; the answer is given in Theorem~\ref{thm:wreathreduce} below.
To this end, let $P$ be a Sylow $p$-subgroup of $G$, and 
for $j=1,\ldots,m$ 
let $R_j$ be a vertex of the $FG$-module $M_j$. 

\medskip
(a)\, Now let $L$ be an indecomposable direct summand of
$M^{\otimes n}\otimes \widehat{N}$.
Then, by \ref{noth:wreathmodules}(c) and Lemma \ref{lemma:directsum}, there
is some $\lambda=(\lambda_1,\ldots,\lambda_m)\in\Lambda(m,n)$ 
such that $L$ is isomorphic to a direct summand of 
\begin{align*}
\ind_{G\wr\mathfrak{S}_\lambda}^{G\wr\mathfrak{S}_n}
(M^{\otimes\lambda})\otimes\hatN
&\cong \ind_{G\wr\mathfrak{S}_\lambda}^{G\wr\mathfrak{S}_n}
(M^{\otimes\lambda}
\otimes\res^{G\wr\mathfrak{S}_n}_{G\wr\mathfrak{S}_\lambda}(\hatN)) \\
&\cong 
\ind_{G\wr\mathfrak{S}_\lambda}^{G\wr\mathfrak{S}_n}
(M^{\otimes\lambda}\otimes 
\Inf_{\mathfrak{S}_\lambda}^{G\wr\mathfrak{S}_\lambda}
(\res^{\mathfrak{S}_n}_{\mathfrak{S}_\lambda}(N)))\,.
\end{align*}
Then, by \cite[Prop. 5.1]{Ku} and the discussion preceding it,
there is an indecomposable direct summand $N'$ of
$\res^{\mathfrak{S}_n}_{\mathfrak{S}_\lambda}(N)$ such that 
\begin{equation*}\label{eqn L'}
L\cong \ind_{G\wr\mathfrak{S}_\lambda}^{G\wr\mathfrak{S}_n}(L'),
\quad\text{where}\quad
L':=M^{\otimes\lambda}\otimes\hatN'\,.
\end{equation*}
In particular,
$L'$ is an indecomposable $F[G\wr\mathfrak{S}_\lambda]$-module.
Now, if $Q'\leq\mathfrak{S}_\lambda$ 
is a vertex of $N'$, 
then
\begin{equation}\label{eqn Q}
Q:=(R_1^{\lambda_1}\times\cdots\times R_m^{\lambda_m})\rtimes (Q')^\sharp
\leq G\wr\mathfrak{S}_\lambda\leq G\wr\mathfrak{S}_n
\end{equation} 
is a common vertex of $L$ and $L'$.

\medskip
(b)\, We consider a common $Q$-source $S$
of the $F[G\wr\mathfrak{S}_n]$-module $L$
and the $F[G\wr\mathfrak{S}_\lambda]$-module $L'$.
To this end, we from now on additionally suppose that
each of the $FG$-modules $M_1,\ldots,M_m$ has trivial sources.
Note that this, in particular, includes the case that all these
modules are projective.

Let $P_\lambda=P_{\lambda_1}\times\cdots\times P_{\lambda_m}$ 
be a Sylow $p$-subgroup of the Young subgroup
$\mathfrak{S}_\lambda=\mathfrak{S}_{\lambda_1}\times\cdots\times 
 \mathfrak{S}_{\lambda_m}$.
Then, in consequence of \cite[Prop. 1.2, Prop. 3.1]{Ku},
\begin{equation*}\label{eqn R}
R_\lambda:=(R_1^{\lambda_1}\times\cdots\times R_m^{\lambda_m})\rtimes 
           P_\lambda^\sharp
=(R_1\wr P_{\lambda_1})\times\cdots\times 
 (R_m\wr P_{\lambda_m})\leq 
G\wr\mathfrak{S}_\lambda 
\end{equation*}
is  a vertex of the indecomposable $F[G\wr \mathfrak{S}_\lambda]$-module
$M^{\otimes\lambda}$, and 
$M^{\otimes\lambda}$ is a trivial-source module, that is, 
$M^{\otimes\lambda}\mid \ind_{R_\lambda}^{G\wr \mathfrak{S}_\lambda}(F)$.
(Note that the assertion on vertices is just a special case of (\ref{eqn Q}).)

\smallskip

Suppose that $S'$ is  a $Q'$-source of the 
$F\mathfrak{S}_\lambda$-module $N'$. 
\mbox{}From \cite[Prop. 2]{Har} we deduce that the 
$F[G\wr\mathfrak{S}_\lambda]$-module
$\hatN':=\Inf_{\mathfrak{S}_\lambda}^{G\wr\mathfrak{S}_\lambda}(N')$
has vertex $P\wr Q'$, and
$\widehat{S'}:=\Inf_{Q'}^{P\wr Q'}(S')$ 
is a $(P\wr Q')$-source of $\hatN'$. Thus we have  
$\hatN'\mid \ind_{P\wr Q'}^{G\wr\mathfrak{S}_\lambda} (\widehat{S'})$.
Hence Mackey's Tensor Product Theorem 
shows that there is some $g\in G\wr\mathfrak{S}_\lambda$ such that
$L'$ is a direct summand of
$\ind_{\widetilde{Q}}^{G\wr\mathfrak{S}_\lambda}
  (\res^{{}^g(P\wr Q')}_{\widetilde{Q}}({}^g\widehat{S'}))$,
where 
$$ \widetilde{Q}:= R_\lambda\cap {}^g(P\wr Q')
   \leq G\wr\mathfrak{S}_\lambda\, .$$

Hence $S$ is a direct summand of 
$\res^{G\wr\mathfrak{S}_\lambda}_{Q}(
\ind_{\widetilde{Q}}^{G\wr\mathfrak{S}_\lambda}
  (\res^{{}^g(P\wr Q')}_{\widetilde{Q}}({}^g\widehat{S'})))$,
thus, by Mackey's Theorem, there is some $h\in G\wr\mathfrak{S}_\lambda$
such that $S$ is a direct summand of
$$ \ind_{Q\cap{}^h \widetilde{Q}}^Q(
   \res^{{}^h\widetilde{Q}}_{Q\cap{}^h \widetilde{Q}}{}^h(
   \res^{{}^g(P\wr Q')}_{\widetilde{Q}}
   ({}^g\widehat{S'})))
=\ind_{Q\cap{}^h \widetilde{Q}}^Q(
 \res^{{}^{hg}(P\wr Q')}_{Q\cap{}^h \widetilde{Q}}({}^{hg}\widehat{S'})) .$$
Since $S$ has vertex $Q$, we infer $Q\cap{}^h \widetilde{Q}=Q$, so 
$S$ is a direct summand of $\res^{{}^{hg}(P\wr Q')}_Q({}^{hg}\widehat{S'})$.

Now we consider the natural epimorphism 
${}^-:G\wr\mathfrak{S}_\lambda\longrightarrow \mathfrak{S}_\lambda$, and let
$\sigma:=\overline{hg}\in \mathfrak{S}_\lambda$. Then
$$ Q'=\overline{(R_1^{\lambda_1}\times\cdots\times R_m^{\lambda_m})
   \rtimes (Q')^\sharp}
=\overline{Q}\leq\overline{{}^{hg}(P\wr Q')}={}^\sigma Q'\,. $$
Hence we have $\sigma\in N_{\mathfrak{S}_\lambda}(Q')$.
Moreover, since the base group $G^n$ acts trivially on 
$\widehat{S'}$, we infer that 
$Q\cap G^n=R_1^{\lambda_1}\times\cdots\times R_m^{\lambda_m}$
acts trivially on $S$, and since $S'$ is an indecomposable $FQ'$-module,
we finally conclude that 
\begin{equation*}\label{eqn S}
S\cong \res^{{}^{hg}(P\wr Q')}_Q({}^{hg}\widehat{S'})
\cong \Inf_{Q'}^Q({}^{\sigma}S')\,.
\end{equation*}
Recall from \ref{noth:vertex} that, since $S'$ is a $Q'$-source 
of $N'$, so is ${}^{\sigma}S'$.

\medskip
(c)\, Keep the notation as in part~(b), and suppose additionally that the
$FG$-module $M$ is projective, that is, $R_1=\cdots =R_m=\{1\}$, 
thus $Q=(Q')^\sharp$.
Furthermore, we now get 
$\sigma^\sharp=(1,\ldots,1;\sigma)
\in N_{G\wr\mathfrak{S}_\lambda}((Q')^\sharp)$, 
since $\sigma\in N_{\mathfrak{S}_\lambda}(Q')$. Hence we have 
$S\cong ({}^\sigma S')^\sharp={}^{\sigma^\sharp} ((S')^\sharp)$.
Since $S'$ was an arbitrary $Q'$-source of $N'$, this shows that 
indeed every $Q'$-source of $N'$, in the way just described, 
yields a common $Q$-source of $L$ and $L'$.
\end{noth}

Altogether we have, in particular, now proved the following:

\begin{thm}\label{thm:wreathreduce}
Let $M$ be an $FG$-module, let $N$ be an indecomposable 
$F\mathfrak{S}_n$-module, and let $L$ be an indecomposable
direct summand of the $F[G\wr\mathfrak{S}_n]$-module 
$M^{\otimes n}\otimes \hatN$. Suppose that
$M\cong\bigoplus_{j=1}^m a_jM_j$ is an indecomposable
direct sum decomposition of the $FG$-module $M$.
For $j=1,\ldots,m$, let $R_j$ be a vertex of $M_j$. 

\medskip

{\rm (a)}\, There are some 
$\lambda=(\lambda_1,\ldots,\lambda_m)\in\Lambda(m,n)$ and an 
indecomposable direct summand $N'$ of
$\res_{\mathfrak{S}_\lambda}^{\mathfrak{S}_n}(N)$ such that
$L\cong \ind_{G\wr\mathfrak{S}_\lambda}^{G\wr\mathfrak{S}_n}
 (M^{\otimes\lambda}\otimes \widehat{N'})$.
For every vertex $Q'\leq \mathfrak{S}_\lambda$ of $N'$, the group
$$Q:=(R_1^{\lambda_1}\times\cdots\times R_m^{\lambda_m})\rtimes(Q')^\sharp
  \leq G\wr\mathfrak{S}_\lambda\leq G\wr\mathfrak{S}_n$$
is a vertex of $L$.

\medskip

{\rm (b)}\, Suppose in addition that $M_1,\ldots,M_m$ are
trivial-source modules. Then there is, moreover, a $Q'$-source $S'$
of $N'$ such that $\Inf_{Q'}^Q(S')$ is a $Q$-source of $L$. 
Here the inflation is taken with respect to the natural 
epimorphism $Q\longrightarrow Q'$.

\medskip

{\rm (c)}\, If $M$ is a projective $FG$-module, and if $\lambda$ and $N'$
are as is part~(a), then $(Q')^\sharp$ is a vertex of $L$. If $S'$ is any 
$Q'$-source of $N'$ then $(S')^\sharp$ is also a $(Q')^\sharp$-source of $L$.
\end{thm}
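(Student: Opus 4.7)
The plan is to assemble the three parts from the discussion already laid out in paragraphs \ref{noth:wreathmodules} and \ref{noth:L}. For part~(a), I would first invoke Lemma~\ref{lemma:directsum} to decompose $M^{\otimes n}$ into a direct sum, indexed by $\lambda\in\Lambda(m,n)$, of modules of the form $\ind_{G\wr\mathfrak{S}_\lambda}^{G\wr\mathfrak{S}_n}(M^{\otimes\lambda})$. Using the projection formula $\ind_H^K(X)\otimes Y\cong \ind_H^K(X\otimes \res^K_H(Y))$ together with the identification $\res^{G\wr\mathfrak{S}_n}_{G\wr\mathfrak{S}_\lambda}(\widehat{N})\cong \Inf^{G\wr\mathfrak{S}_\lambda}_{\mathfrak{S}_\lambda}(\res^{\mathfrak{S}_n}_{\mathfrak{S}_\lambda}(N))$ and decomposing the inner restriction into indecomposables $N'$, the analysis reduces to the summands $\ind_{G\wr\mathfrak{S}_\lambda}^{G\wr\mathfrak{S}_n}(M^{\otimes\lambda}\otimes\widehat{N'})$. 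Both the indecomposability of $M^{\otimes\lambda}\otimes\widehat{N'}$ and its persistence under induction to $G\wr\mathfrak{S}_n$ follow from \cite[Prop.~5.1]{Ku} and the discussion preceding it; the vertex formula $Q=(R_1^{\lambda_1}\times\cdots\times R_m^{\lambda_m})\rtimes(Q')^\sharp$ is then read off from the corresponding vertex result for tensor-induced modules in \cite{Ku}, using that induction from a subgroup containing the vertex preserves a chosen vertex.

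For part~(b), the trivial-source hypothesis makes $M^{\otimes\lambda}$ a trivial-source $F[G\wr\mathfrak{S}_\lambda]$-module with vertex $R_\lambda=(R_1\wr P_{\lambda_1})\times\cdots\times (R_m\wr P_{\lambda_m})$, as follows from \cite[Prop.~1.2, Prop.~3.1]{Ku}. On the other hand, by \cite[Prop.~2]{Har} the inflated module $\widehat{N'}$ has vertex $P\wr Q'$ (with $P$ a Sylow $p$-subgroup of $G$) and $(P\wr Q')$-source $\widehat{S'}=\Inf_{Q'}^{P\wr Q'}(S')$. I would then chain two applications of Mackey's theorem: first, Mackey's tensor product theorem exhibits $L'$ as a summand of $\ind_{\widetilde{Q}}^{G\wr\mathfrak{S}_\lambda}\bigl(\res^{{}^g(P\wr Q')}_{\widetilde{Q}}({}^g\widehat{S'})\bigr)$ for some $g\in G\wr\mathfrak{S}_\lambda$, where $\widetilde{Q}:=R_\lambda\cap{}^g(P\wr Q')$; second, the ordinary Mackey formula applied to the restriction of this module to the vertex $Q$ forces the source $S$ to appear as a summand of a single term of the form $\res^{{}^{hg}(P\wr Q')}_Q({}^{hg}\widehat{S'})$. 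Because the base group $G^n$ acts trivially on $\widehat{S'}$ and $S'$ is indecomposable as an $FQ'$-module, this restriction collapses to $\Inf_{Q'}^Q({}^\sigma S')$, where $\sigma:=\overline{hg}\in N_{\mathfrak{S}_\lambda}(Q')$. Since ${}^\sigma S'$ is again a $Q'$-source of $N'$, we obtain the asserted form.

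Part~(c) is then a specialization: when $M$ is projective we have $R_1=\cdots=R_m=\{1\}$, so $Q=(Q')^\sharp$, and the conjugating element $\sigma$ appearing in~(b) lifts to $\sigma^\sharp\in N_{G\wr\mathfrak{S}_\lambda}((Q')^\sharp)$. Consequently the source identified in~(b) equals $({}^\sigma S')^\sharp={}^{\sigma^\sharp}((S')^\sharp)$, which is $N_{G\wr\mathfrak{S}_\lambda}(Q)$-conjugate to $(S')^\sharp$ for any $Q'$-source $S'$. Since sources are only determined up to such conjugation, every choice of $S'$ furnishes $(S')^\sharp$ as a $(Q')^\sharp$-source of $L$.

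The main obstacle is the final collapse in part~(b): tracking the two Mackey decompositions carefully enough to isolate the source as $\Inf_{Q'}^Q({}^\sigma S')$ rather than a larger direct sum. This step crucially relies on the trivial-source hypothesis for the $M_j$ (so that $R_\lambda$ has the product-of-wreath-products form giving $\widetilde{Q}$ the desired shape after intersection with ${}^g(P\wr Q')$) and on the vanishing of the $G^n$-action on $\widehat{S'}$, which reduces the restricted module to a genuine inflation. Once this is done, the bookkeeping identifying $\overline{hg}$ with an element of $N_{\mathfrak{S}_\lambda}(Q')$ is automatic from the natural epimorphism $G\wr\mathfrak{S}_\lambda\twoheadrightarrow \mathfrak{S}_\lambda$.
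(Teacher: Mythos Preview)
Your proposal is correct and mirrors the paper's own argument almost step for step: the decomposition via Lemma~\ref{lemma:directsum} and the projection formula, the appeal to \cite[Prop.~5.1]{Ku} for indecomposability and the vertex formula, the use of \cite[Prop.~1.2, Prop.~3.1]{Ku} and \cite[Prop.~2]{Har} under the trivial-source hypothesis, and the double Mackey argument collapsing the source to $\Inf_{Q'}^Q({}^\sigma S')$ are exactly what the paper does in \ref{noth:L}. The only phrasing to tighten is your remark that ``induction from a subgroup containing the vertex preserves a chosen vertex'': this is not a general fact, but here it is part of what \cite[Prop.~5.1]{Ku} asserts (or alternatively follows from \ref{noth:vertex}(c),(d) once $L=\ind(L')$ is known to be indecomposable), so your citation already covers it.
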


The next theorem concerning decompositions of Lie modules
is a  consequence of the results in \cite{Br}, \cite{BS} and \cite{LT}.
This decomposition and Theorem~\ref{thm:wreathreduce} will then enable us
to reduce the problem of determining vertices of indecomposable
direct summands of $\LieFpf(n)$ to the case where $n$ is a $p$-power.

\begin{thm}\label{prop:BE}
Let $k>1$ with $p\nmid k$. Then, for every $s\geq 0$, there is a  projective 
$F\mathfrak{S}_{kp^{s}}$-module $X_{kp^s}$
such that, for all $d\geq 0$, one has
\begin{equation}\label{eqn lie dec}
\LieF(k p^d)\cong\bigoplus_{t=0}^d
 \ind_{\mathfrak{S}_{kp^{t}}\wr\mathfrak{S}_{p^{d-t}}}^{\mathfrak{S}_{kp^d}}
(X_{kp^t}^{\otimes p^{d-t}}\otimes\widehat{\LieF(p^{d-t})})\,.
\end{equation}
\end{thm}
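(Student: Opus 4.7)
The plan is to deduce the decomposition~(\ref{eqn lie dec}) from the Bryant--Schocker decomposition of Lie powers of $\GL$-modules via the classical Schur functor, following the approach of Lim--Tan \cite{LT}.

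First, I would recall the Bryant--Schocker decomposition from \cite{BS}: for every polynomial $\GL_m(F)$-module $V$ and every $n=kp^d$ with $p\nmid k$, there is a natural $\GL_m(F)$-module isomorphism
$$L^n(V)\;\cong\;\bigoplus_{t=0}^d L^{p^{d-t}}\bigl(B_{kp^t}(V)\bigr),$$
where $B_{kp^t}(V)$ is a certain direct summand of $V^{\otimes kp^t}$, homogeneous of degree $kp^t$ in $V$. Specialising to $V=F^n$ and taking the $(1^n)$-weight space $\mathfrak{W}^n$, the left-hand side becomes $\LieF(n)$ by definition, and the task reduces to identifying $\mathfrak{W}^n$ of each summand on the right with the corresponding term in~(\ref{eqn lie dec}).

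Next, I would set
$$X_{kp^t}\;:=\;\mathfrak{W}^{kp^t}\bigl(B_{kp^t}(F^m)\bigr)$$
for any $m\geq kp^t$; standard dimension-stability of the Schur functor on the polynomial representations of $\GL_m(F)$ ensures that this $F\mathfrak{S}_{kp^t}$-module is independent of $m$, and in particular does not depend on $d$. The key translation formula, essentially the content of \cite{LT}, reads: for any polynomial $\GL$-module $W$ homogeneous of degree $r$,
$$\mathfrak{W}^{rs}\bigl(L^s(W)\bigr)\;\cong\;\ind_{\mathfrak{S}_r\wr\mathfrak{S}_s}^{\mathfrak{S}_{rs}}\bigl(\mathfrak{W}^r(W)^{\otimes s}\otimes\widehat{\LieF(s)}\bigr).$$
To prove it, one writes $L^s(W)=W^{\otimes s}\ast\omega_s$ (with $\omega_s$ acting by place permutation on the $s$ tensor blocks); passing to the $(1^{rs})$-weight space of $W^{\otimes s}$ yields $\ind_{\mathfrak{S}_r\wr\mathfrak{S}_s}^{\mathfrak{S}_{rs}}(\mathfrak{W}^r(W)^{\otimes s})$, since the wreath product is exactly the stabiliser in $\mathfrak{S}_{rs}$ of the partition of $\{1,\ldots,rs\}$ into $s$ blocks of size $r$; the residual place-permutation action by $\omega_s$ lies in the section $\mathfrak{S}_s^\sharp\leq \mathfrak{S}_r\wr\mathfrak{S}_s$ of~\ref{noth:wreathmodules}(b) and therefore tensors the inner module by $\widehat{\LieF(s)}$. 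Applying this formula with $r=kp^t$, $s=p^{d-t}$ and $W=B_{kp^t}(F^n)$ produces exactly the $t$-th summand in~(\ref{eqn lie dec}), and summing over $t$ completes the decomposition.

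Finally, I would justify that $X_{kp^s}$ is projective as an $F\mathfrak{S}_{kp^s}$-module. For $s=0$ this is immediate: $B_k(V)=L^k(V)$, so $X_k\cong\LieF(k)$, which is projective since $p\nmid k$ (the element $\omega_k/k\in F\mathfrak{S}_k$ is idempotent). For $s\geq 1$, where $p\mid kp^s$, projectivity is non-trivial and draws on \cite{Br}: Bryant realises $B_{kp^s}(V)$, up to Frobenius twists, as a direct summand of a tensor-induced construction whose building block is $L^k(\,\cdot\,)$, and the Schur functor then carries this to a module induced from $\mathfrak{S}_k\wr\mathfrak{S}_{p^s}$ whose inner factor is a tensor power of $\LieF(k)$; the latter induction is projective over $F\mathfrak{S}_{kp^s}$ because $\LieF(k)$ is, and projectivity passes to direct summands. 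The principal obstacle is the second step: carefully bookkeeping the diagonal, place-permutation and wreath-product actions in the Schur-functor/Lie-power compatibility formula, and in particular checking that the tensor factor $\widehat{\LieF(p^{d-t})}$ appears with the correct inflation from $\mathfrak{S}_{p^{d-t}}$. A secondary difficulty is unpacking enough of the explicit construction of $B_{kp^s}$ from \cite{BS,Br} to exhibit projectivity of $X_{kp^s}$ for $s\geq 1$.
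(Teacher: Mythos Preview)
Your overall strategy coincides with the paper's: apply the Schur functor to the Bryant--Schocker decomposition of Lie powers and invoke the Lim--Tan formula $\mathfrak{W}^{rs}(L^s(W))\cong\ind_{\mathfrak{S}_r\wr\mathfrak{S}_s}^{\mathfrak{S}_{rs}}(\mathfrak{W}^r(W)^{\otimes s}\otimes\widehat{\LieF(s)})$ to identify each summand. That part, including your case $s=0$, is fine.

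The genuine gap is your projectivity argument for $X_{kp^s}$ when $s\geq 1$. You claim that the Schur functor carries $B_{kp^s}$ into a direct summand of an induction from $\mathfrak{S}_k\wr\mathfrak{S}_{p^s}$ with inner factor a tensor power of $\LieF(k)$, and that this induction is projective because $\LieF(k)$ is. But $\LieF(k)^{\otimes p^s}$, though projective over $F[\mathfrak{S}_k^{p^s}]$, is \emph{not} projective over $F[\mathfrak{S}_k\wr\mathfrak{S}_{p^s}]$: by the vertex computation in \ref{noth:L}(a) (with all $R_j=\{1\}$) it has vertex $(P_{p^s})^\sharp$, and by \ref{noth:vertex}(d) this vertex survives in some summand after induction to $\mathfrak{S}_{kp^s}$. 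So the module you propose to embed $X_{kp^s}$ into is not projective, and the argument collapses. The invocation of Frobenius twists does not repair this, and in any case such twists do not interact transparently with the weight-space functor $\mathfrak{W}^m$.

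The paper's fix is much simpler and replaces your entire ``secondary difficulty''. The point, explicit in \cite[Thm.~3.4]{Br}, is that each $B_{kp^t}(V)$ has the concrete form $V^{\otimes kp^t}*f_{kp^t}$ for an \emph{idempotent} $f_{kp^t}\in F\mathfrak{S}_{kp^t}$ acting by place permutations, independent of $n$. Since $\mathfrak{W}^m((F^n)^{\otimes m})\cong F\mathfrak{S}_m$ and place permutation becomes right multiplication under this identification (see \ref{noth:Lie}(a)), one obtains in one line
$$X_m=\mathfrak{W}^m\bigl((F^n)^{\otimes m}*f_m\bigr)=\mathfrak{W}^m\bigl((F^n)^{\otimes m}\bigr)*f_m\cong F\mathfrak{S}_m\cdot f_m,$$
a direct summand of the regular module---hence projective and manifestly independent of $n$.
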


\begin{proof}
We fix $n:=k p^d$, for some $d\geq 0$, and consider 
the natural $F[\GL_n(F)]$-module $F^n$.
Recall from \ref{noth:Lie}(b) that the Lie module $\LieF(n)$ 
is the image of the $F[\GL_n(F)]$-module $L^n(F^n)$ under the Schur functor
$\mathfrak{W}^{n}$,
taking $n$-homogeneous polynomial $F[\GL_n(F)]$-modules to 
$F\mathfrak{S}_n$-modules. 

By \cite[Thm.~3.4]{Br},
for all $t\geq 0$ there are idempotents 
$f_{kp^t}$ in the group algebra $F\mathfrak{S}_{kp^t}$, 
only depending on $p$, $k$ and $t$, but independent of $n$, 
such that there is an isomorphism of $F[\GL_n(F)]$-modules 
\begin{equation}\label{eqn BS thm}
L^n(F^n)\cong \bigoplus_{t=0}^d L^{p^{d-t}}((F^n)^{\otimes kp^t}*f_{kp^t})\,;
\end{equation}
recall from \ref{noth:E} that $\mathfrak{S}_{kp^t}$ acts from the right
on $(F^n)^{\otimes kp^t}$ by place permutations.

Now fix some $0\leq t\leq d$, and set $m:=kp^t$ and $q:=p^{d-t}$.
Suppose that $V$ is any $m$-homogeneous polynomial
$F[\GL_n(F)]$-module, so that the Schur functor can be applied to $V$,
yielding the left $F\mathfrak{S}_{m}$-module 
$\mathfrak{W}^{m}(V)$.
Then, by \cite[Cor. 3.2]{LT}, there is an isomorphism
of $F\mathfrak{S}_n$-modules
\begin{equation*}\label{eqn LT}
\mathfrak{W}^{n}(L^{q}(V))\cong 
\ind_{\mathfrak{S}_{m}\wr\mathfrak{S}_{q}}^{\mathfrak{S}_n}
(\mathfrak{W}^{m}(V)^{\otimes q}\otimes 
\widehat{\LieF(q)})\,.
\end{equation*}
We apply this to our fixed direct summand on the right-hand side of
(\ref{eqn BS thm}), with $V:=(F^n)^{\otimes m}*f_{m}$.
Thus letting
$$ X_m:=\mathfrak{W}^{m}((F^n)^{\otimes m}*f_{m}) $$
yields the decomposition (\ref{eqn lie dec}), and it remains to
show that $X_{m}$ is a projective $F\mathfrak{S}_{m}$-module,
and does not depend on the $F[\GL_n(F)]$-module $F^n$ used to define it.

But, since $n\geq m$, the Schur functor $\mathfrak{W}^{m}$ 
takes $(F^n)^{\otimes m}$ to the regular module $F\mathfrak{S}_{m}$, 
and the isomorphism $ \mathfrak{W}^{m}((F^n)^{\otimes m})\cong F\mathfrak{S}_m$ translates
the place permutation action into right multiplication. Thus we deduce
$$X_{m}= \mathfrak{W}^{m}((F^n)^{\otimes m}*f_{m})=\mathfrak{W}^{m}((F^n)^{\otimes m})*f_{m}
  \cong F\mathfrak{S}_{m}\cdot f_{m}\,,$$
which is of course a projective $F\mathfrak{S}_{m}$-module,
and independent of $n$.
\end{proof}


Note that also for the case $k=1$, which is excluded from the
present discussion, we have a decomposition similar to (\ref{eqn lie dec}), 
but in this case becoming trivial inasmuch as for $t=0$ we get the 
trivial $F\mathfrak{S}_1$-module $X_{1}\cong F$, 
and $X_{p^t}=\{0\}$ for $t\geq 1$.
Thus the crucial question arising now is whether the projective modules
$X_{kp^t}$ in the decomposition (\ref{eqn lie dec}) could possibly 
be $\{0\}$. The next lemma shows that this is not the case, which will
be essential for Theorem~\ref{thm:vertexLie} below. 
We remark that, by the above proof, we have $X_{kp^t}\neq\{0\}$ if and 
only if the idempotent $f_{kp^t}\in F\mathfrak{S}_{kp^t}$ is different
from zero. 
This should follow from a close inspection of the results in \cite{BS} and \cite{Br},
but unfortunately is not explicitly stated there. Hence we provide
a straightforward proof, based on 
calculations in \cite{BLT}.

\begin{lemma}\label{lemma:X}
Keep the notation as in Theorem~\ref{prop:BE}.
Then $X_{kp^t}\neq \{0\}$, for all $t\geq 0$.
\end{lemma}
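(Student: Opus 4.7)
As observed in the proof of Theorem~\ref{prop:BE}, one has $X_m \cong F\mathfrak{S}_m \cdot f_m$, so that $X_{kp^t}\neq\{0\}$ is equivalent to $\dim X_{kp^t}>0$. My plan is to argue by induction on $t\geq 0$.

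For the base case $t=0$, specialise the decomposition (\ref{eqn lie dec}) to $d=0$: the index set collapses to $\{0\}$, and since $\mathfrak{S}_k\wr\mathfrak{S}_1=\mathfrak{S}_k$ together with $\LieF(1)\cong F$ trivialises the induction and the tensor factor, we obtain $\LieF(k)\cong X_k$. Thus $\dim X_k=(k-1)!>0$.

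For the inductive step, assume $\dim X_{kp^s}>0$ for $0\leq s<t$ and specialise (\ref{eqn lie dec}) to $d=t$, isolating the summand indexed by $s=t$, which by the same identification equals $X_{kp^t}$ itself. Comparing dimensions gives
\begin{equation*}
\dim X_{kp^t}\;=\;(kp^t-1)!\;-\;\sum_{s=0}^{t-1}\frac{(kp^t)!\,(p^{t-s}-1)!}{(kp^s)!^{\,p^{t-s}}\,(p^{t-s})!}\cdot\bigl(\dim X_{kp^s}\bigr)^{p^{t-s}}\,,
\end{equation*}
so the task reduces to proving that the subtracted sum stays strictly below $(kp^t-1)!$. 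I would derive this inequality from the explicit Witt-type dimension formulas for $\dim X_{kp^s}$ worked out in \cite{BLT}, in combination with the inductive hypothesis, which together make the individual terms sufficiently transparent to bound.

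A cleaner conceptual route is to apply Witt's formula for the dimension of a free Lie power to both sides of the Bryant--Schocker decomposition (\ref{eqn BS thm}), viewed with $V$ of arbitrary dimension $m$: writing $y_t(m):=\dim_F(V^{\otimes kp^t}\cdot f_{kp^t})$, which is a polynomial in $m$ of degree at most $kp^t$, one obtains the polynomial identity
\begin{equation*}
\frac{1}{kp^d}\sum_{e\mid kp^d}\mu(e)\,m^{kp^d/e}\;=\;\sum_{t=0}^{d}\frac{1}{p^{d-t}}\sum_{e\mid p^{d-t}}\mu(e)\,y_t(m)^{p^{d-t}/e}\,.
\end{equation*}
A hypothetical vanishing $y_{t_0}\equiv 0$ erases the corresponding summand on the right, and a leading-coefficient analysis on both sides (with the left having leading term $m^{kp^d}/(kp^d)$) should yield a contradiction, forcing $y_t\neq 0$ for every $t$, and hence $f_{kp^t}\neq 0$. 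The main obstacle in either route is the combinatorial bookkeeping required to rule out accidental cancellations---this is exactly the calculation alluded to in \cite{BLT}.
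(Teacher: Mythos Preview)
Your setup is correct and is essentially the same as the paper's: specialising the decomposition to $d=t$ and isolating the top summand gives exactly the dimension recursion you wrote down, and the base case $X_k\cong\LieF(k)$ is right. But the proof is not complete: the entire content of the lemma lies in the inequality you state and then defer (``I would derive this inequality from the explicit Witt-type dimension formulas \ldots\ worked out in \cite{BLT}''). As written, neither of your two routes actually carries out a bounding argument; you acknowledge this yourself in the final sentence.

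The missing step is short once you normalise. Set $x_{kp^s}:=\dim X_{kp^s}/(kp^s-1)!$; since $X_{kp^s}$ embeds in $\LieF(kp^s)$ one has $0\leq x_{kp^s}\leq 1$. Dividing your recursion by $(kp^t-1)!$ and substituting $i:=t-s$ collapses the combinatorics to
\[
x_{kp^t}\;=\;1-\sum_{i=1}^{t}(kp^{t-i})^{-(p^i-1)}\,x_{kp^{t-i}}^{\,p^i}\,.
\]
Now the subtracted sum is at most $\sum_{i=1}^{t}(kp^{t-i})^{-(p^i-1)}\leq\sum_{i=1}^{t}k^{-(p^i-1)}<\sum_{j\geq 1}k^{-j}=(k-1)^{-1}\leq 1$, so $x_{kp^t}>0$. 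This is precisely the paper's argument (which cites \cite{BLT} only for the recursion, not for the bound). Your leading-coefficient route via Witt's formula is a genuinely different idea, but it is also left as a sketch, and making it rigorous looks harder than the three-line estimate above.
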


\begin{proof}
For $t\geq 0$, write
$$x_{kp^t}:=\frac{\dim(X_{kp^t})}{\dim(\LieF(kp^t))}=\frac{\dim(X_{kp^t})}{(kp^t-1)!}\,.$$
Note that, taking $d=t$ in Theorem~\ref{prop:BE}, we see that $X_{kp^t}$ is isomorphic to a submodule
of the Lie module $\LieF(kp^t)$.
Thus $0\leq x_{kp^t}\leq 1$ and it suffices to show that $x_{kp^t}>0$.  Now observe that, in the notation
of \cite[page 851]{BLT}, we have $X_{kp^t}=C(kp^t)$. Hence, by \cite[(10)]{BLT},
$$x_{kp^t}=1-\sum_{i=1}^ta_i'(x_{kp^{t-i}})^{p^i}\,,$$
where $a_i'=(kp^{t-i})^{-(p^i-1)}$. Since $k>1$, we have
$$\sum_{i=1}^ta_i'(x_{kp^{t-i}})^{p^i}\leq \sum_{i=1}^ta_i'\leq \sum_{i=1}^tk^{-(p^i-1)}<\sum_{j=1}^\infty k^{-j}=(k-1)^{-1}\leq 1\,.$$
Therefore, $x_{kp^t}>0$.
\end{proof}

We can now formulate the following reduction result:

\begin{thm}\label{thm:vertexLie}
Let $n=k\cdot p^d$, for some $d\geq 0$ and some $k>1$ with $p\nmid k$.

\medskip

{\rm (a)}\,  
Let $L$ be an indecomposable direct summand of $\LieF(n)$ with vertex $Q$.
There are some integer $t\in\{0,\ldots,d\}$, a composition $\lambda$ of $p^{d-t}$, 
and an indecomposable direct summand $L'$ of 
$\res_{\mathfrak{S}_\lambda}^{\mathfrak{S}_{p^{d-t}}}(\LieF(p^{d-t}))$
such that
$$ Q\leq_{\mathfrak{S}_n}(Q')^\sharp\leq 
\mathfrak{S}_{\lambda}^\sharp\leq 
\mathfrak{S}_{kp^{t}}\wr\mathfrak{S}_{p^{d-t}}\leq \mathfrak{S}_n\,,
\quad\text{for every vertex $Q'$ of $L'$}\,.$$

\medskip

{\rm (b)}\, In the situation of part~{\rm (a)}, 
if $S'$ is a $Q'$-source of $L'$, then 
there is a $Q$-source $S$ of $L$ such that 
$S\mid \res_{Q}^{(Q')^\sharp}((S')^\sharp)$.
Moreover,
there is an indecomposable direct summand $K$ of $\LieF(p^{d-t})$ having
a vertex $R$ with $Q'\leq R$.
Furthermore, there is an $R$-source $T$ of $K$ such that
$S'\mid \res^{R}_{Q'}(T)$.

\medskip

{\rm (c)}\, Conversely, let $0\leq t\leq d$, and let $K$
be any indecomposable direct summand of $\LieF(p^{d-t})$ with vertex $R$.
Then there is an indecomposable direct summand $L$ of $\LieF(n)$ 
with vertex $R^\sharp$, and every $R$-source of $K$ is then also
an $R^\sharp$-source of $L$.
\end{thm}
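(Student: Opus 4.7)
The plan is to iterate the two main results already available in the paper: the decomposition of $\LieF(n)$ from Theorem~\ref{prop:BE}, combined with the structural analysis of wreath-product modules provided by Theorem~\ref{thm:wreathreduce}. The non-vanishing of the projective twisting factors $X_{kp^t}$ from Lemma~\ref{lemma:X} will be essential for the converse direction. Throughout, write $H_t:=\mathfrak{S}_{kp^t}\wr\mathfrak{S}_{p^{d-t}}$ and $Y_t:=X_{kp^t}^{\otimes p^{d-t}}\otimes\widehat{\LieF(p^{d-t})}$, so that Theorem~\ref{prop:BE} reads
$$ \LieF(n) \;\cong\; \bigoplus_{t=0}^d \ind_{H_t}^{\mathfrak{S}_n}(Y_t) \,. $$

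For part~(a), the indecomposable summand $L$ of $\LieF(n)$ appears in the induction of some $Y_t$. After decomposing $Y_t$ into indecomposable $FH_t$-summands, using Lemma~\ref{lemma:directsum} for $X_{kp^t}^{\otimes p^{d-t}}$ and the indecomposable decomposition $\LieF(p^{d-t})\cong\bigoplus_j K_j$ of the second factor, one extracts an indecomposable summand $L_0$ of $Y_t$ such that $L\mid\ind_{H_t}^{\mathfrak{S}_n}(L_0)$. Since $X_{kp^t}$ is projective, Theorem~\ref{thm:wreathreduce}(c) exhibits a composition $\lambda$ of $p^{d-t}$ and an indecomposable summand $L'$ of $\res_{\mathfrak{S}_\lambda}^{\mathfrak{S}_{p^{d-t}}}(\LieF(p^{d-t}))$ such that, for every vertex $Q'$ of $L'$, $(Q')^\sharp$ is a vertex of $L_0$. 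Applying \ref{noth:vertex}(d) to $L\mid\ind_{H_t}^{\mathfrak{S}_n}(L_0)$ then gives $Q\leq_{\mathfrak{S}_n}(Q')^\sharp$, and the chain $(Q')^\sharp\leq\mathfrak{S}_\lambda^\sharp\leq H_t\leq\mathfrak{S}_n$ is immediate from the definitions.

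For part~(b), Theorem~\ref{thm:wreathreduce}(c) further yields that any $Q'$-source $S'$ of $L'$ gives $(S')^\sharp$ as a $(Q')^\sharp$-source of $L_0$; then \ref{noth:vertex}(d), applied once more to $L\mid\ind_{H_t}^{\mathfrak{S}_n}(L_0)$, produces a $Q$-source $S$ of $L$ (after suitably conjugating within $\mathfrak{S}_n$, which is absorbed into the choice of $S'$) with $S\mid\res_Q^{(Q')^\sharp}((S')^\sharp)$. For the second assertion, $L'$ being an indecomposable summand of $\res_{\mathfrak{S}_\lambda}^{\mathfrak{S}_{p^{d-t}}}(\LieF(p^{d-t}))=\bigoplus_j\res_{\mathfrak{S}_\lambda}^{\mathfrak{S}_{p^{d-t}}}(K_j)$ forces $L'\mid\res_{\mathfrak{S}_\lambda}^{\mathfrak{S}_{p^{d-t}}}(K)$ for some indecomposable summand $K$ of $\LieF(p^{d-t})$; then \ref{noth:vertex}(c) supplies a vertex $R$ of $K$ with $Q'\leq R$ together with an $R$-source $T$ satisfying $S'\mid\res_{Q'}^R(T)$.

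For the converse in part~(c), fix an indecomposable summand $M_1$ of the nonzero projective module $X_{kp^t}$, which exists by Lemma~\ref{lemma:X}. Selecting the composition $\lambda=(p^{d-t},0,\ldots,0)$ in Lemma~\ref{lemma:directsum} and the discussion surrounding Theorem~\ref{thm:wreathreduce}, the module $L_0:=M_1^{\otimes p^{d-t}}\otimes\widehat{K}$ is an indecomposable $FH_t$-summand of $Y_t$, and Theorem~\ref{thm:wreathreduce}(c) (with $M_1$ projective and $N'=K$) identifies its vertex as $R^\sharp$ with $R^\sharp$-source $S^\sharp$, for every $R$-source $S$ of $K$. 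Hence $\ind_{H_t}^{\mathfrak{S}_n}(L_0)$ is a direct summand of $\LieF(n)$ by Theorem~\ref{prop:BE}, and it remains to extract an indecomposable summand $L$ of $\ind_{H_t}^{\mathfrak{S}_n}(L_0)$ with vertex exactly $R^\sharp$. Since $S^\sharp\mid\res_{R^\sharp}^{H_t}(L_0)$, Mackey's formula (via the identity double coset) shows that $S^\sharp$ appears as a summand of $\res_{R^\sharp}^{\mathfrak{S}_n}(\ind_{H_t}^{\mathfrak{S}_n}(L_0))$; decomposing the latter module into indecomposables, $S^\sharp$ lies in the restriction of some summand $L$. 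Then \ref{noth:vertex}(c) forces $R^\sharp$ to be contained (up to $\mathfrak{S}_n$-conjugacy) in a vertex of $L$, while the reverse containment is immediate from the $R^\sharp$-projectivity of $L_0$ (and hence of $\ind_{H_t}^{\mathfrak{S}_n}(L_0)$ and $L$), so the vertex of $L$ equals $R^\sharp$ with $R^\sharp$-source $S^\sharp$. The hardest step, as I see it, is this last matching of vertex \emph{and} source: one must ensure that the indecomposable summand produced by Mackey has not just vertex $R^\sharp$ but also the correct source, and the essential input is precisely Lemma~\ref{lemma:X}, without which the $t$-th summand in the decomposition of Theorem~\ref{prop:BE} could collapse to zero.
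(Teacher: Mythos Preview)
Your proof is correct and follows essentially the same route as the paper's: parts~(a) and~(b) are deduced from Theorem~\ref{prop:BE}, Theorem~\ref{thm:wreathreduce}(c) (using projectivity of $X_{kp^t}$), and the restriction/induction facts in \ref{noth:vertex}; part~(c) uses Lemma~\ref{lemma:X} to pick a nonzero projective summand and then Theorem~\ref{thm:wreathreduce}(c) to build $L_0$ with the right vertex and source. The only difference is cosmetic: for the final step of~(c) the paper simply invokes the last sentence of \ref{noth:vertex}(d) (an indecomposable summand of $\ind_{H_t}^{\mathfrak{S}_n}(L_0)$ shares vertex and source with $L_0$), whereas you re-derive this via Mackey and \ref{noth:vertex}(c)---which works, but is longer than necessary.
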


\begin{proof}
Parts (a) and (b) are immediate consequences of \ref{noth:vertex}, 
Theorem~\ref{thm:wreathreduce} and Theorem~\ref{prop:BE}.
Note that here we need the fact that the $F\mathfrak{S}_{kp^t}$-modules
$X_{kp^t}$ in Theorem~\ref{prop:BE} are projective.

It remains to prove (c). So let $t\in\{0,\ldots,d\}$.
Let further $X$ be any indecomposable direct summand of the 
$F\mathfrak{S}_{kp^t}$-module $X_{kp^t}$; 
note that here we need Lemma~\ref{lemma:X} to ensure that all
the projective modules in Theorem~\ref{prop:BE} are indeed non-zero.
Now consider the one-part partition $\lambda=(p^{d-t})$ of $p^{d-t}$.
Then, by \ref{noth:L}, we get the indecomposable
$F[\mathfrak{S}_{kp^t}\wr\mathfrak{S}_{p^{d-t}}]$-module 
$L':=X^{\otimes p^{d-t}}\otimes \widehat{K}$.
By Theorem~\ref{thm:wreathreduce}(c), $L'$ has vertex $R^\sharp$, and 
every $R$-source $T$ of $K$ yields the $R^\sharp$-source $T^\sharp$ of $L'$.
As we have remarked in \ref{noth:vertex}(d),
there is an indecomposable direct summand $L$ of
$\ind_{\mathfrak{S}_{kp^t}\wr\mathfrak{S}_{p^{d-t}}}^{\mathfrak{S}_n}(L')$ 
with vertex $R^\sharp$ and $R^\sharp$-source $T^\sharp$. 
By Theorem~\ref{prop:BE}, we have
$\ind_{\mathfrak{S}_{kp^t}\wr
 \mathfrak{S}_{p^{d-t}}}^{\mathfrak{S}_n}(L')\mid \LieF(n)$,
and hence assertion (c) follows.
\end{proof}

Exploiting our computational data from Section \ref{sec:comp}
and Theorem \ref{thm:vertexLie} we obtain the following

\begin{cor}\label{cor:smallLievertex2}
Let $p=2$ and $n=k\cdot 2^d$, where $k\geq 1$ is odd and $0\leq d\leq 3$,
and let $L$ be an indecomposable direct summand of $\LieF(n)$. 

\medskip

\quad {\rm (a)}\, 
Let $Q\leq\mathfrak{S}_n$ be a vertex of $L$, and let
$S$ be a $Q$-source. Then $Q$ is elementary abelian of order 
$|Q|\leq 2^d$, and $S$ is an endo-permutation module.
\medskip

\quad {\rm (b)}\, 
Suppose that $|Q|$ is maximal amongst the orders of the vertices of 
all the indecomposable direct summands of $\LieF(n)$. Then
one has 
$Q=_{\mathfrak{S}_n} E_{2^d}^\sharp\leq \mathfrak{S}_{2^d}^\sharp\leq
\mathfrak{S}_k\wr\mathfrak{S}_{2^d}\leq \mathfrak{S}_n$; 
in particular, $Q$ is uniquely determined up to $\mathfrak{S}_n$-conjugation.
Moreover, every $E_{2^d}$-source of $\LieFpf(2^d)$ 
is an $E_{2^d}^\sharp$-source of $L$.
\end{cor}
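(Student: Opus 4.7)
The plan is to apply the reduction Theorem~\ref{thm:vertexLie} to transport the question to the Lie modules $\LieF(2^e)$ with $0\leq e\leq d$, all of whose indecomposable summands are classified explicitly in Section~\ref{sec:comp}: the projective summands have trivial vertex, and the distinguished non-projective summand $\LieFpf(2^e)$ is indecomposable with vertex $E_{2^e}$ and an endo-permutation source (trivially for $e\in\{0,1\}$; by \ref{noth:lie4} and \ref{noth:lie8} for $e\in\{2,3\}$); moreover this is, up to $\mathfrak{S}_{2^e}$-conjugacy, the unique indecomposable summand whose vertex attains the maximal order $2^e$.

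For part~(a), let $L\mid\LieF(n)$ be indecomposable with vertex $Q$ and $Q$-source $S$. Theorem~\ref{thm:vertexLie}(a)--(b) yields some $t\in\{0,\ldots,d\}$, a composition $\lambda$ of $2^{d-t}$, an indecomposable summand $L'$ of $\res^{\mathfrak{S}_{2^{d-t}}}_{\mathfrak{S}_\lambda}(\LieF(2^{d-t}))$ with vertex $Q'$ and $Q'$-source $S'$, and an indecomposable summand $K\mid\LieF(2^{d-t})$ with vertex $R$ and $R$-source $T$, satisfying $Q\leq_{\mathfrak{S}_n}(Q')^\sharp$, $Q'\leq R$, $S'\mid\res^R_{Q'}(T)$, and $S\mid\res_Q^{(Q')^\sharp}((S')^\sharp)$. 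Since $R$ is elementary abelian of order at most $2^{d-t}$ and $T$ is endo-permutation, $Q$ will be elementary abelian of order at most $2^d$, and $S$ will be endo-permutation by the closure properties recorded in \ref{noth:endoperm}(a): inflation along $(Q')^\sharp\twoheadrightarrow Q'$, restriction to $Q$, and passage to direct summands each preserve the endo-permutation property.

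For part~(b), applying Theorem~\ref{thm:vertexLie}(c) at $t=0$ to $\LieFpf(2^d)$ (or, for $d=0$, to the trivial projective summand of $\LieF(1)$) exhibits an indecomposable summand of $\LieF(n)$ with vertex $E_{2^d}^\sharp$, so the maximal vertex order is exactly $2^d$ by part~(a). Assuming now $|Q|=2^d$, the chain $|Q|\leq |Q'|\leq |R|\leq 2^{d-t}$ forces $t=0$ and $|Q|=|Q'|=|R|=2^d$. The uniqueness of the vertex-$E_{2^d}$ summand of $\LieF(2^d)$ then gives $R=_{\mathfrak{S}_{2^d}}E_{2^d}$ and $Q'=R$ up to conjugation; transitivity of $E_{2^d}$ on $\{1,\ldots,2^d\}$ together with $Q'\leq\mathfrak{S}_\lambda$ compels $\lambda=(2^d)$, so $(Q')^\sharp\leq\mathfrak{S}_{2^d}^\sharp\leq\mathfrak{S}_k\wr\mathfrak{S}_{2^d}$ and $Q=_{\mathfrak{S}_n}E_{2^d}^\sharp$. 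Under these equalities both restriction maps become the identity, yielding $S'=T$ and $S=T^\sharp$; since all $E_{2^d}$-sources of $\LieFpf(2^d)$ are $N_{\mathfrak{S}_{2^d}}(E_{2^d})$-conjugate and $(-)^\sharp$ sends this normalizer into $N_{\mathfrak{S}_n}(E_{2^d}^\sharp)$, every $E_{2^d}$-source of $\LieFpf(2^d)$ arises this way as an $E_{2^d}^\sharp$-source of $L$.

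The main obstacle I expect is the step in part~(b) that pins down $\lambda=(2^d)$ in the maximal-vertex case: one must leverage transitivity of $E_{2^d}$ together with the fact that a transitive subgroup of $\mathfrak{S}_{2^d}$ cannot be contained in a proper Young subgroup. Everything else is essentially bookkeeping, combining the explicit classification in Section~\ref{sec:comp} with the hereditary behaviour of endo-permutation modules from \ref{noth:endoperm}(a).
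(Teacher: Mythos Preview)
Your proof is correct and follows essentially the same route as the paper's: invoke Theorem~\ref{thm:vertexLie} to reduce to the explicit data on $\LieF(2^e)$ for $e\leq 3$ from Section~\ref{sec:comp}, and use the closure properties of endo-permutation modules from \ref{noth:endoperm}(a).

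In part~(b) you are actually more explicit than the paper. The paper, having forced $t=0$ and $K=\LieFpf(2^d)$, simply cites Theorem~\ref{thm:vertexLie}(c) for the final source claim; strictly speaking that theorem only \emph{produces} some $L$ with the desired vertex and sources, rather than applying to a given $L$. Your argument via parts~(a)--(b) of Theorem~\ref{thm:vertexLie}, collapsing the restriction chain to identities when all orders equal $2^d$ and then propagating to all sources via the normalizer, fills this in. Your step pinning down $\lambda=(2^d)$ via transitivity is correct but not strictly needed: once $|Q|=|Q'|=|R|=2^d$ you already have $Q'=R=_{\mathfrak{S}_{2^d}}E_{2^d}$, hence $(Q')^\sharp=_{\mathfrak{S}_n}E_{2^d}^\sharp$, and the displayed chain $E_{2^d}^\sharp\leq\mathfrak{S}_{2^d}^\sharp\leq\mathfrak{S}_k\wr\mathfrak{S}_{2^d}$ in the corollary holds by definition of $(-)^\sharp$.
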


\begin{proof}
To show (a), by Theorem~\ref{thm:vertexLie} and \ref{noth:vertex}(c),
there is some integer $t\in\{0,\ldots,d\}$, and there is an indecomposable direct summand 
$L'$ of $\LieF(2^{d-t})$ with vertex $R$ such that 
$Q\leq_{\mathfrak{S}_n}R^\sharp\leq 
\mathfrak{S}_{k\cdot 2^t}\wr\mathfrak{S}_{2^{d-t}}\leq \mathfrak{S}_n$.
(Note that the image of the map $(-)^\sharp$
depends on the particular choice of $t$.)
Moreover, we observe from the results of \ref{noth:lie4}, \ref{noth:lie8} 
and Remark \ref{rem:lievertex} that all the indecomposable direct summands 
of $\LieF(2^{d-t})$ are either projective,
or are isomorphic to $\LieFpf(2^{d-t})$ and have 
elementary abelian vertex of order $2^{d-t}$
and endo-permutation sources. Finally, recall that the property of being 
an endo-permutation module is retained under restriction to subgroups 
and under taking direct summands. 

To show (b), recall again that if $L$ is an indecomposable direct summand 
of $\LieF(n)$ then there is some $t\leq d$ such that the vertices of $L$ are conjugate 
to subgroups of $E_{2^{d-t}}^\sharp$.
Now note  that, by Theorem~\ref{thm:vertexLie}(c), there
indeed is an indecomposable direct summand of $\LieF(n)$ 
having a vertex that is $\mathfrak{S}_n$-conjugate to $E_{2^d}^\sharp$.
Thus, if $|Q|$ is maximal then we have $|Q|=2^d$,  and
$Q$ is $\mathfrak{S}_n$-conjugate to $E_{2^d}^\sharp$; hence we may
assume that $Q=E_{2^d}^\sharp$. But this forces $t=0$ and $K=\LieFpf(2^d)$.
By Theorem~\ref{thm:vertexLie}(c) again, every $E_{2^d}$-source of
$\LieFpf(2^d)$ is an $E_{2^d}^\sharp$-source of $L$.
\end{proof}

The following result deals with the case where $p=3$. The proof is
completely analogous to that of Corollary~\ref{cor:smallLievertex2}, 
and is thus left to the reader.

\begin{cor}\label{cor:smallLievertex3}
Let $p=3$ and $n=k\cdot 3^d$, where $k\geq 1$ is such that $3\nmid k$,
and $0\leq d\leq 2$, and 
let $L$ be an indecomposable direct summand of $\LieF(n)$.

\medskip

\quad {\rm (a)}\, 
Let $Q\leq\mathfrak{S}_n$ be a vertex of $L$, and let
$S$ be a $Q$-source. Then $Q$ is elementary abelian of order
$|Q|\leq 3^d$, and $S$ is an endo-permutation module.

\medskip

\quad {\rm (b)}\, 
Let $|Q|$ be maximal amongst the orders of the vertices of
all the indecomposable direct summands of $\LieF(n)$. Then one has
$Q=_{\mathfrak{S}_n}E_{3^d}^\sharp\leq \mathfrak{S}_{3^d}^\sharp\leq
\mathfrak{S}_k\wr\mathfrak{S}_{3^d}\leq \mathfrak{S}_n$; 
in particular, $Q$ is uniquely determined up to $\mathfrak{S}_n$-conjugation.
Moreover, every $E_{3^d}$-source of $\LieFpf(3^d)$
is an $E_{3^d}^\sharp$-source of $L$.
\end{cor}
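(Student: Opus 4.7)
The plan is to mimic verbatim the proof of Corollary \ref{cor:smallLievertex2}, replacing the prime $2$ by $3$ and feeding in the corresponding computational inputs from Section \ref{sec:comp}. So I would invoke Theorem \ref{thm:vertexLie} to reduce questions about vertices and sources of summands of $\LieF(n)$ to the same questions about indecomposable summands of $\LieF(3^{d-t})$ for some $t\in\{0,\ldots,d\}$, and then cite the explicit descriptions of these Lie modules for small exponents that have already been established in the paper.

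For part (a), I would start with an indecomposable direct summand $L$ of $\LieF(n)$ with vertex $Q$ and a $Q$-source $S$. By Theorem \ref{thm:vertexLie}(a),(b) combined with \ref{noth:vertex}(c), I obtain some $t\in\{0,\ldots,d\}$, an indecomposable direct summand $L'$ of $\LieF(3^{d-t})$ with vertex $R$, and an $R$-source $T$ of $L'$, subject to the inclusion $Q\leq_{\mathfrak{S}_n}R^\sharp\leq\mathfrak{S}_{k\cdot 3^t}\wr\mathfrak{S}_{3^{d-t}}$, along with $S\mid \res^{R^\sharp}_Q(T^\sharp)$. Because $d\leq 2$, the exponent $d-t$ lies in $\{0,1,2\}$, so the candidates for $L'$ are controlled by the explicit descriptions of $\LieF(1)$, $\LieF(3)$, and $\LieF(9)$ recorded in Remark \ref{rem:lievertex} and in \ref{noth:lie9}. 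In each case, the non-projective indecomposable summand is precisely $\LieFpf(3^{d-t})$, which is elementary abelian of order $3^{d-t}$ with an endo-permutation source; projective summands have trivial vertex. Since the class of endo-permutation modules is closed under restriction to subgroups and under taking direct summands (see \ref{noth:endoperm}(a)), the summand $S$ of $\res^{R^\sharp}_Q(T^\sharp)$ is itself endo-permutation, and $|Q|\leq|R^\sharp|=3^{d-t}\leq 3^d$; both assertions of (a) follow.

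For part (b), the existence of a summand attaining the extreme case is provided by Theorem \ref{thm:vertexLie}(c): taking $t=0$ and $K=\LieFpf(3^d)$, whose vertex is $E_{3^d}$, yields an indecomposable direct summand of $\LieF(n)$ with vertex $\mathfrak{S}_n$-conjugate to $E_{3^d}^\sharp$. If $|Q|$ is maximal among vertex orders of indecomposable summands of $\LieF(n)$, then $|Q|=3^d$, and the reduction in (a) forces $t=0$, $R=E_{3^d}$, and $L'=\LieFpf(3^d)$, so $Q=_{\mathfrak{S}_n}E_{3^d}^\sharp$. The transfer of sources (every $E_{3^d}$-source of $\LieFpf(3^d)$ being an $E_{3^d}^\sharp$-source of $L$) is directly supplied by Theorem \ref{thm:vertexLie}(c).

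The only potential obstacle I anticipate is the conjugacy bookkeeping needed to promote the ``up to subconjugation'' conclusion of part (a) to the ``up to conjugation'' conclusion of part (b); this is resolved by matching the upper bound $Q\leq_{\mathfrak{S}_n}E_{3^d}^\sharp$ from (a) against the lower bound on maximal vertex order furnished by Theorem \ref{thm:vertexLie}(c). Since the qualitative picture in the $p=3$ computations (indecomposability of $\LieFpf(3^d)$, elementary abelian vertex $E_{3^d}$, endo-permutation source) is identical to that of the $p=2$ case, no new technical difficulty appears relative to the proof of Corollary \ref{cor:smallLievertex2}.
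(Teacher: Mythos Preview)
Your proposal is correct and follows exactly the approach the paper intends: the paper itself states that the proof is ``completely analogous to that of Corollary~\ref{cor:smallLievertex2}, and is thus left to the reader,'' and your write-up reproduces that argument with the prime $3$ and the inputs from Remark~\ref{rem:lievertex} and \ref{noth:lie9} in place of \ref{noth:lie4}, \ref{noth:lie8}. The only cosmetic slip is the phrase ``which is elementary abelian of order $3^{d-t}$'' attached to $\LieFpf(3^{d-t})$ rather than to its vertex; once corrected, the elementary-abelian property of $Q$ follows since $Q$ is subconjugate to the elementary abelian group $R^\sharp$.
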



{\sc R.B.: School of Mathematics, University of Manchester, \\
Oxford Road, Manchester, M13 9PL, United Kingdom\\}
{\sf roger.bryant@manchester.ac.uk}

\smallskip

{\sc S.D.: Department of Mathematics,
University of Kaiserslautern,\\
P.O. Box 3049,
67653 Kaiserslautern,
Germany}\\
{\sf danz@mathematik.uni-kl.de}

\smallskip

{\sc K.E.: 
Mathematical Institute,
University of Oxford,\\
24--29 St Giles',
Oxford,
OX1 3LB,
United Kingdom,}\\
{\sf erdmann@maths.ox.ac.uk}

\smallskip

{\sc J.M.:
Lehrstuhl D f\"ur Mathematik, RWTH Aachen \\
Templergraben 64, D-52062 Aachen, Germany} \\
{\sf Juergen.Mueller@math.rwth-aachen.de}

\end{document}